\newtheorem{Theorem}{Theorem}[section]
\newtheorem{Lemma}[Theorem]{Lemma}
\newtheorem{Remark}[Theorem]{Remark}
\newtheorem{Example}[Theorem]{Example}
\newtheorem{Definition}[Theorem]{Definition}
\newtheorem{Open Problem}[Theorem]{Open Problem}
\begin{document}

\title{Soft Rough Graphs}
\author{R. Noor, I. Irshad, I. Javaid*}
\keywords{Soft rough graph, vertex and edge induced soft rough
graph, soft rough tree, Cartesian product of soft rough graphs,
lexicographic product
of soft rough graphs, corona product of soft rough graphs, join of soft rough graphs.\\
\indent $^*$ Corresponding author: imran.javaid@bzu.edu.pk}
\address{Centre for advanced studies in Pure and Applied Mathematics,
Bahauddin Zakariya University Multan, Pakistan. \newline
Email:ramshanoorsherazi@gmail.com,
iiqra.bzu@gmail.com,imran.javaid@bzu.edu.pk}
\date{}
\maketitle

\begin{abstract}Soft set theory and rough set theory are mathematical tools to deal with
uncertainties. In \cite{FF2011}, authors combined these concepts and
introduced soft rough sets. In this paper, we introduce the concepts
of soft rough graphs, vertex and edge induced soft rough graphs and
soft rough trees. We define some products with examples in soft
rough graphs.
\end{abstract}
\section{Introduction and Preliminaries} Many problems in engineering,
computer science, social science, economics, medical science etc. have various
uncertainties. To deal with these uncertainties, researchers have
proposed a number of theories such as probability theory, fuzzy set
theory, soft set theory, rough set theory, vague set theory etc. In
\cite{MD1999}, Molodtsov introduced the concept of soft sets. Soft
set theory has applications in many different fields including the
smoothness of functions, game theory, operational research, Perron
integration, probability theory and measurement theory. Research on
soft set theory is growing rapidly \cite{MAS2015, MRA2003, NM,
TRB2014}. In \cite{PZ1982}, Pawlak introduced the concept of rough
sets. This theory deals with the approximation of an arbitrary
subset of a universe by two definable or observable subsets called
lower and upper approximations. It has been successfully applied to
pattern recognition, machine learning, intelligent systems,
inductive reasoning, image processing, knowledge discovery, signal
analysis, decision analysis, expert systems and many other fields
\cite{pawlk1, pawlk2, pawlk3, pawlk4}. In \cite{FF2011}, authors
introduced the concept of soft rough sets. Also, soft sets are
combined with fuzzy sets and rough sets in \cite{FF2010}. The
concept of rough soft group is defined in \cite{GJ2013}.

The concepts of soft graph and rough graph are introduced in
\cite{MAS2015} and \cite{HTK} respectively. In this paper, we
introduce the concept of soft rough graph, vertex and edge induced
soft rough graph and soft rough trees. We study AND and OR
operations in soft rough graphs. Also, we study the Cartesian,
lexicographic and corona product and join of two soft rough graphs.

Now we recall some notions related to soft sets and rough sets. Let
$U$ be the universe of discourse, $\mathcal{R}$ be an equivalence
relation on $U$ and $\mathcal{P}$ be the universe of all possible
parameters related to the objects in $U$. The power set of $U$ is
denoted by $P(U)$.
\begin{Definition}\cite{MD1999}A pair
$\mathcal{F}=(\mathfrak{F},\,\mathcal{A})$ is called soft set over
$U$, where $\mathcal{A} \subseteq$ $\mathcal{P}$ and  $\mathfrak{F}$
is a set valued mapping, $\mathfrak{F}:\mathcal{A} \rightarrow
P(U)$. The ordered pair $(U, \mathcal{P})$ is known as soft
universe.
\end{Definition}
\begin{Definition}\cite{PZ1982}The pair $(U, \mathcal{R})$ is
called Pawlak approximation space. For $\mathcal{X} \subseteq U$,
 $$\mathcal{R}_{*}(\mathcal{X})=\{x \in U: [x]_{\mathcal{R}} \subseteq U\}$$
 $$\mathcal{R}^{*}(\mathcal{X})=\{x \in U: [x]_{\mathcal{R}} \cap \mathcal{X} \neq \phi \} $$
 are called lower and upper approximations of the set $\mathcal{X}.$
 If $\mathcal{R}_{*}(\mathcal{X})=\mathcal{R}^{*}(\mathcal{X})$ then $\mathcal{X}$ is said to
 be definable otherwise $\mathcal{X}$ is called rough set.
\end{Definition}
\begin{Definition}\cite{FF2011}
Let $\mathcal{S}=(U, \mathcal{F} )$ be a soft approximation space.
Based on $\mathcal{S}$, the lower and upper soft rough
approximations of $\mathcal{X} \subseteq $ $U$ are defined
by:\[\mathfrak{F_{*}}(\mathcal{X})=\{u \in U: \exists \, a \, \in \,
\mathcal{A}\, | \,u \, \in \, \mathfrak{F}(a) \, \subseteq \,
\mathcal{X}\}\] and
\[\mathfrak{F^{*}}(\mathcal{X})=\{u \in  U: \exists \, a \, \in \,
\mathcal{A}\, | \,u \, \in \, \mathfrak{F}(a)\,\, and\,\,
\mathfrak{F}(a) \, \cap \,\mathcal{X} \neq \phi\}\] respectively. If
$\mathfrak{{F}_{*}}(\mathcal{X})=\mathfrak{{F}^{*}}(\mathcal{X})$
then $\mathcal{X}$ is said to be soft definable otherwise
$\mathcal{X}$ is called soft rough set, written as
$(\mathfrak{F}_*(\mathcal{X}),\mathfrak{F}^*(\mathcal{X}),\mathcal{A})$.
\end{Definition}
A graph $G=(V(G), E(G))$ is a pair, where $V(G)$ is the set of
vertices and $E(G)$ is the set of edges of $G.$ When there is no
ambiguity, we write $G=(V,E)$. If two vertices $x$ and $y$ are
adjacent in $G$, we write $xy\in E(G)$. Two edges are said to be
adjacent if they share a common vertex. A graph $H$ is a subgraph of
$G$ if $V(H)\subseteq V(G)$ and $E(H)\subseteq E(G)$. The open
neighborhood of $v$ in $G$, $N(v)$, is defined as $N(v)=\{u| uv \in
E(G)\}$ and $N[v]=\{v\}\cup N(v)$. The degree of $v$ is defined as
$|N(v)|$. A graph is called simple if it contains no loops or
multiple edges. The distance between two vertices $u,v$ of $G$,
$d(u,v)$, is the length of a shortest path between them. The
diameter of a graph $G$, $diam(G)$, is defined as
$diam(G)=\,$max$\{d(u,v)|u,v\in V(G)\}$.
\begin{Definition}\cite{MAS2015}
A graph
$\tilde{G}$=$(G,\,\mathfrak{F},\,\mathfrak{K},\,\mathcal{A})$ is
called soft graph if it
satisfies  the following conditions:\\
$1)$ $G=(V,\,E)$ is a simple graph.\\
$2)$ $\mathcal{A}$ is a non-empty set of parameters.\\
$3)$ $(\mathfrak{F},\,\mathcal{A})$ is a soft  set over $V$. \\
$4)$ $(\mathfrak{K},\,\mathcal{A})$ is a soft set over $E$. \\
$5)$  $\mathfrak{H}(x)=( \mathfrak{F}(x),\,\mathfrak{K}(x))$ is a
subgraph of G for all $x$ $\in$ $ \mathcal{A}.$
\par A soft graph can be represented by
  $ \tilde{G}=\langle\mathfrak{F},\,\mathfrak{K},\,\mathcal{A}\rangle=\{\mathfrak{H}(x)|\, x \in \mathcal{A}\}.$
  The set of all soft graphs of
$G$ is denoted by $\mathbf{S}\mathcal{G}(G).$
\end{Definition}

\section{Soft Rough Graphs}
Let $\mathcal{S}=(U,\,\mathcal{F})$ be a soft approximation space,
$\mathcal{A} \subseteq \mathcal{P}$ be any non-empty subset of
parameters and $\mathcal{X} \subseteq U$ be any non-empty subset of
$U.$ Let $(
\mathfrak{F_{*}}(\mathcal{X}),\,\mathfrak{F^{*}}(\mathcal{X}),\mathcal{A})$
be a soft rough set over $V$ with its lower and upper approximations
defined as:
\[\mathfrak{F_{*}}(\mathcal{X})=\{u \in U: \exists \, a \, \in \,
\mathcal{A}\, | \,u \, \in \, \mathfrak{F}(a) \, \subseteq \,
\mathcal{X}\}\] and
\[\mathfrak{F^{*}}(\mathcal{X})=\{u \in U: \exists \, a \, \in \,
\mathcal{A}\, | \,u \, \in \, \mathfrak{F}(a)\,\, and\,\,
\mathfrak{F}(a) \, \cap \, \mathcal{X} \neq \phi\}\] respectively,
where $(\mathfrak{F},\mathcal{A})$ be a soft set over $V$ with
$\mathfrak{F}:\mathcal{A} \rightarrow P(V)$ defined as
$\mathfrak{F}(x)=\{y\in V: xRy\}$. Let $(
\mathcal{K_{*}}(\mathcal{X}),\,\mathcal{K^{*}}(\mathcal{X}),\mathcal{A})$
be a soft rough set over $E$ with its lower and upper approximations
defined as:
\[\mathcal{K_{*}}(\mathcal{X})=\{e \in E: \exists \, a \, \in \,
\mathcal{A}\, | \,e \, \in \, \mathcal{K}(a) \, \subseteq \,
\mathcal{X}\}\] and
\[\mathcal{K^{*}}(\mathcal{X})=\{e \in E: \exists \, a \, \in \,
\mathcal{A}\, | \,e \, \in \, \mathcal{K}(a)\,\, and\,\,
\mathcal{K}(a) \, \cap \, \mathcal{X} \neq \phi\}\] respectively,
where $(\mathcal{K},\mathcal{A})$ be a soft set over $E$ with
$\mathcal{K}:\mathcal{A} \rightarrow P(E)$ defined as:
\[\mathcal{K}(x)=\{e \in E \,|\, e \, \subseteq \,\mathfrak{F}(x)\}.\]
\begin{Definition}
A graph
$\tilde{G}=(G,\mathfrak{F_{*}},\mathcal{K_{*}},\mathfrak{F^{*}},\mathcal{K^{*}},\mathcal{A},
\mathcal{X})$ is called soft rough graph if it satisfies the
following conditions:
\\ $1)$ $ G=(V,E)$ is a simple graph.\\ $2)$ $\mathcal{A}$ be a non-empty set of
parameters.\\ $3)$ $\mathcal{X}$ be any non-empty subset of $U.$
\\ $4)$ $(\mathfrak{F^{*}}(\mathcal{X}),\,\mathfrak{F_{*}}(\mathcal{X})
,\,\mathcal {A}) $ be a soft rough set over $V$.
\\
$5)$ $(\mathcal{K^{*}}(\mathcal{X}),\,\mathcal{K_{*}}(\mathcal{X}),\,\mathcal{A}) $  be a soft rough set over $E$.\\
$6)$
$\mathcal{H^{*}}(\mathcal{X})=(\mathfrak{F^{*}}(\mathcal{X}),\,\mathcal{K^{*}}(\mathcal{X}))$
and
$\mathcal{H_{*}}(\mathcal{X})=(\mathfrak{F_{*}}(\mathcal{X}),\,\mathcal{K_{*}}(\mathcal{X}))$
are subgraphs of $G.$
\par A soft rough graph can be represented by
\[\tilde{G}=\langle\mathfrak{F_{*}},\mathcal{K_{*}},\mathfrak{F^{*}},\mathcal{K^{*}},\mathcal{A},
\mathcal{X}\rangle=\{\mathcal{H_{*}}(\mathcal{X}),\,\mathcal{H^{*}}(\mathcal{X})\}.\]
The set of all soft rough graphs of $G$ is denoted by
$\mathcal{S}\mathfrak{R}\mathcal{G}(G).$
\end{Definition}
\begin{Definition} A soft rough graph
$\tilde{G}=\langle\mathfrak{F_{*}},\mathcal{K_{*}},
\mathfrak{F^{*}},\mathcal{K^{*}},\mathcal{A}, \mathcal{X}\rangle$
 is said to be lower vertex induced graph if
$$
\mathcal{H_{*}}(\mathcal{X})=(\mathfrak{F_{*}}(\mathcal{X}),\,\mathcal{K_{*}}(\mathcal{X}))=\langle\mathfrak{F_{*}}(\mathcal{X})\rangle
\,\,\, \mbox{for} \,\,\, \mathcal{X}\,\subseteq\, V ,$$ upper vertex
induced graph if $$
\mathcal{H^{*}}(\mathcal{X})=(\mathfrak{F^{*}}(X),\mathcal{K^{*}}(\mathcal{X}))=\langle\mathfrak{F^{*}}(\mathcal{X})\rangle
\,\,\,\mbox{for} \,\,\, \mathcal{X}\,\subseteq \, V ,$$lower edge
induced graph if
$$\mathcal{H_{*}}(\mathcal{X})=(\mathfrak{F_{*}}(\mathcal{X}),
\mathcal{K_{*}}(\mathcal{X}))=\langle\mathcal{K_{*}}(\mathcal{X})\rangle
\,\,\,
 \mbox{for} \,\,\, \mathcal{X}\,\subseteq\, V,$$
upper edge induced graph if
$$\mathcal{H^{*}}(\mathcal{X})=(\mathfrak{F^{*}}(\mathcal{X}),\mathcal{
K^{*}}(\mathcal{X}))=\langle\mathcal{K^{*}}(\mathcal{X})\rangle
 \,\,\, \mbox{for} \,\,\, \mathcal{X}\,\subseteq \, V .$$

\end{Definition}
\begin{Example}
Let $G=(V,E)$ be a graph with $V=\{v_1, v_2, v_3, v_4, v_5\} $ and
$E=\{e_1, e_2, e_3, e_4, e_5, e_6, e_7,e_8, e_9\} $ as shown in
\textup{FIGURE \ref{fig12}}.
\begin{figure}[!ht]
    \centerline
      {\includegraphics[width=4cm]{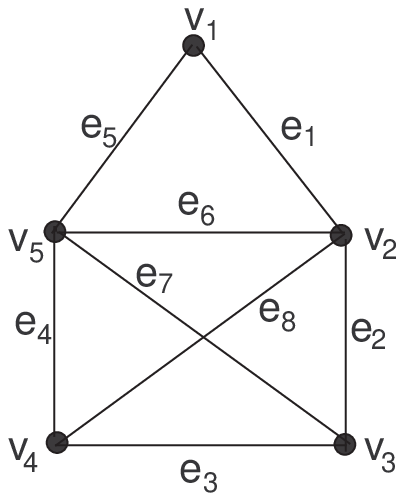}}
      \caption{}\label{fig12}
\end{figure}
 Let $\mathcal{A}=\{v_1,\,v_3\}$ be the set of parameters and
 $\mathfrak{F}:\mathcal{A}\rightarrow P(V)$ is defined as
\[ \mathfrak{F}(x)=\{y \in V: xRy \Leftrightarrow y \in N(x) \}.\]
Then $\mathfrak{F}(v_1)= \{v_2,\,v_5\},$
$\mathfrak{F}(v_3)=\{v_2,\,v_4,\,v_5\}.$ Let
$\mathcal{X}=\{v_1,\,v_2,\,v_5\}$ be a subset of $V.$ The lower and
upper soft rough approximations over $V$ are as follows:
$$\mathfrak{F_{*}}(\mathcal{X})=\{v_2,\,v_5\},\,\,
\mathfrak{F^{*}}(\mathcal{X})=\{v_2,\,v_4,\,v_5\}$$ respectively.
The lower and upper soft rough approximations over $E$ are as
follows:
$$\mathcal{K_{*}}(\mathcal{X})=\{e_6\,\},\,\,
\mathcal{K^{*}}(\mathcal{X})=\{e_4,\,e_6,\,e_8\}$$ respectively.
\begin{figure}[!ht]
    \centerline
      {\includegraphics[width=8cm]{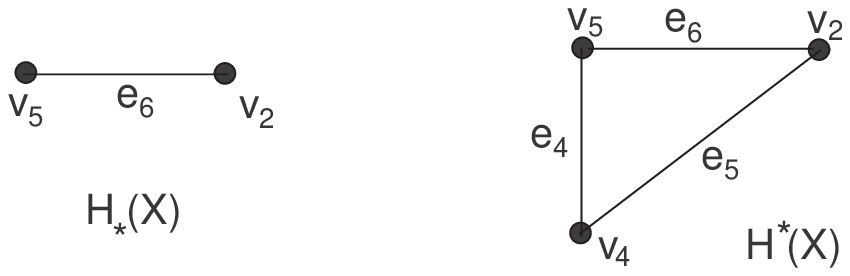}}
      \caption{}\label{fig13}
\end{figure}
Now
$\mathcal{H_{*}}(\mathcal{X})=(\mathfrak{F_{*}}(\mathcal{X}),\,\mathcal{K_{*}}(\mathcal
{X}))=(\{v_2,\,v_5\},\,\{e_6\})$ and
$\mathcal{H^{*}}(\mathcal{X})=(\mathfrak{F^{*}}(\mathcal{X}),\,\mathcal{K^{*}}(\mathcal{X}))=(\{v_2,\,v_4,\,v_5\},\,\{e_4,\,e_6,\,e_8\})$
as shown in \textup{FIGURE \ref{fig13}}.
\end{Example}
\begin{Definition}
Let
$\tilde{G_{1}}=\langle\mathfrak{F_{1*}},\mathcal{K}_{1*},\mathfrak{F}_{1}^{*},\mathcal{K}_{1}^{*},\mathcal{A},
\mathcal{X}\rangle$ and
$\tilde{G_{2}}=\langle\mathfrak{F_{2*}},\mathcal{K}_{2*},\mathfrak{F}_{2}^{*},\mathcal{K}_{2}^{*},\mathcal{B},
\mathcal{X}\rangle$ be two soft rough graphs of $G.$ Then
$\tilde{G_{2}}$ is
called soft rough subgraph of $\tilde{G_{1}}$ if the following conditions hold:\\
$1)$ $\mathcal{B} \subseteq \mathcal{A}.$ \\
$2)$ $\mathcal{H}_{2*}(\mathcal{X})=(\mathfrak{F}_{2*}(\mathcal{X}),
\mathcal{K}_{2*}(\mathcal{X}))$ is a subgraph
of\, $\mathcal{H}_{1*}(\mathcal{X})=(\mathfrak{F}_{1*}(\mathcal{X}),  \mathcal{K}_{1*}(\mathcal{X}) ).$\\
$3)$ $\mathcal{H}_{2}^{*}(
\mathcal{X})=(\mathfrak{F}_{2}^{*}(\mathcal{X}),
   \mathcal{K}_{2}^{*}(\mathcal{X}))$ is a subgraph of\, $\mathcal{H}
_{1}^{*}(\mathcal{X})=(\mathfrak{F}_{1}^{*}(\mathcal{X}),
\mathcal{K}_{1}^{*}(\mathcal{X})).$
\end{Definition}
\begin{Example}
Let $G=(V,E)$ be a graph with $V=\{v_1, v_2, v_3, v_4, v_5,
v_6.v_7\} $ and $E=\{e_1, e_2, e_3, e_4, e_5, e_6, e_7,e_8,
e_9,e_{10},e_{11}\} $ as shown in \textup{FIGURE \ref{fig14}}.
\begin{figure}[!ht]
    \centerline
      {\includegraphics[width=7cm]{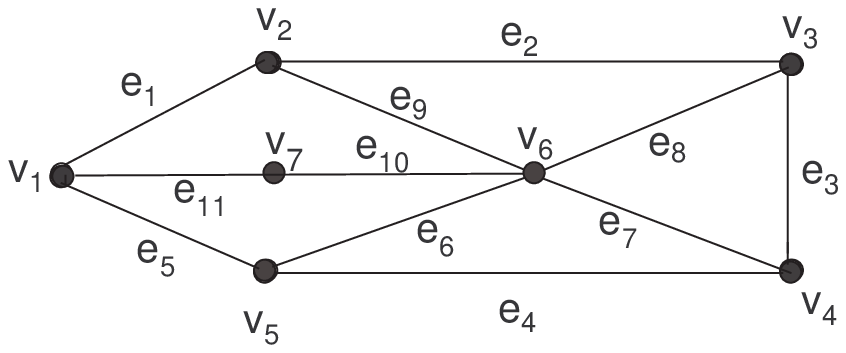}}
      \caption{}\label{fig14}
\end{figure}
 Let $\mathcal{B}=\{v_2,\,v_4\}$ be the set of parameters and
 $\mathfrak{F}_{2}:\mathcal{B}\rightarrow P(V)$ is defined as
$$\mathfrak{F}_{2}(x)=\{y \in V: xRy \Leftrightarrow y\in N(x)
\}.$$ Then $\mathfrak{F}_{2}(v_2)= \{v_1,\,v_3,\,v_6\},$
$\mathfrak{F}_{2}(v_4)=\{v_3,\,v_5,\,v_6\}.$ Let
$\mathcal{X}=\{v_1,\,v_3,\,v_{6}\}$ be a subset of $V.$ The lower
and upper soft rough approximations over $V$ are as follows:
$$\mathfrak{F}_{2*}(\mathcal{X})=\{v_1,\,v_3,\,v_6\},\,\,
\mathfrak{F}_{2}^{*}(\mathcal{X})=\{v_1,\,v_3,\,v_5,\,v_6\}$$
respectively. The lower and upper soft rough approximation over $E$
are as follows:
$$\mathcal{K}_{2*}(\mathcal{X})=\{e_8\,\},\,\,\mathcal{K}_{2}^{*}(\mathcal{X})=\{e_6,\,e_8\}$$
respectively. As $\mathcal{H}_{2*}(\mathcal{X}) ,\,
\mathcal{H}_{2}^{*}(\mathcal{X})$ are shown in \textup{FIGURE
\ref{fig15}}, hence $\tilde{G_{2}}=\{ \mathcal{H}_{2*}(\mathcal{X})
,\, \mathcal{H}_{2}^{*}(\mathcal{X})\}$ be the soft rough graph of
$G$.
\begin{figure}[!ht]
    \centerline
      {\includegraphics[width=8cm]{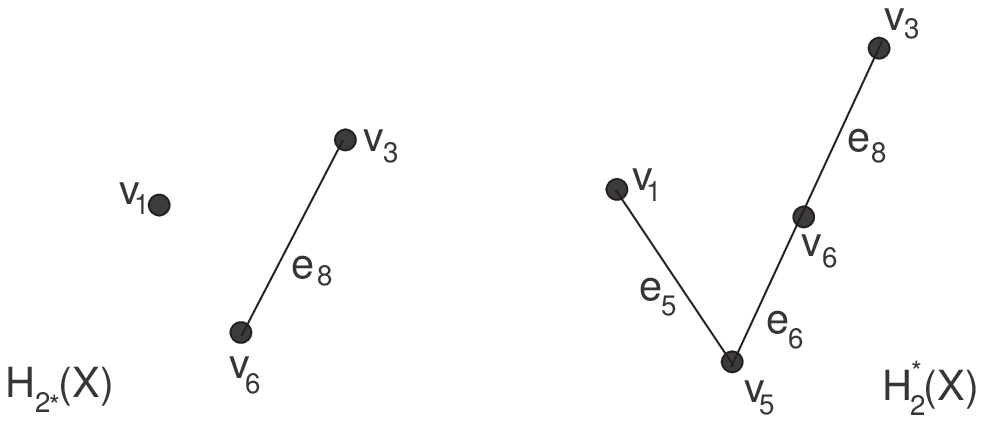}}
      \caption{}\label{fig15}
\end{figure}
Let $\mathcal{A}=\{v_2,\,v_3,\,v_4\}$ be the set of parameters such
that $\mathcal{B} \subseteq \mathcal{A}.$ Let
$\mathfrak{F}_{1}:\mathcal{A}\rightarrow P(V)$ defined as
\[ \mathfrak{F}_{1}(x)=\{y \in V: xRy \Leftrightarrow y\in N(x)
\}.\] Then $\mathfrak{F}_{1}(v_2)= \{v_1,\,v_3,\,v_6\},$
$\mathfrak{F}_{1}(v_4)=\{v_3,\,v_5,\,v_6\},$
$\mathfrak{F}_{1}(v_3)=\{v_2,\,v_4,\,v_6\}.$ The lower and upper
soft rough approximations over $V$ are as follows:
\[\mathfrak{F}_{1*}(\mathcal{X})=\{v_1,\,v_3,\,v_6\},\,\,
\mathfrak{F}_{1}^{*}(\mathcal{X})=\{v_1,\,v_2,\,v_3,\,v_4,\,v_5,\,v_6\}\]
respectively. The lower and upper soft rough approximations over $E$
are as follows:
\[\mathcal{K}_{1*}(\mathcal{X})=\{e_8\,\},\,\,
\mathcal{K}_{1}^{*}(\mathcal{X})=\{e_1,\,e_2,\,e_3,\,e_4,e_5,e_6,e_7,e_8,e_9\}\]
respectively. As $\mathcal{H}_{1*}(\mathcal{X}) ,\,
\mathcal{H}_{1}^{*}(\mathcal{X})$ are shown in \textup{FIGURE
\ref{fig16}}, hence $\tilde{G_{1}}=\{ \mathcal{H}_{1*}(\mathcal{X})
,\, \mathcal{H}_{1}^{*}(\mathcal{X}) \}$ be the soft rough graph of
$G$.
\begin{figure}[!ht]
    \centerline
      {\includegraphics[width=8cm]{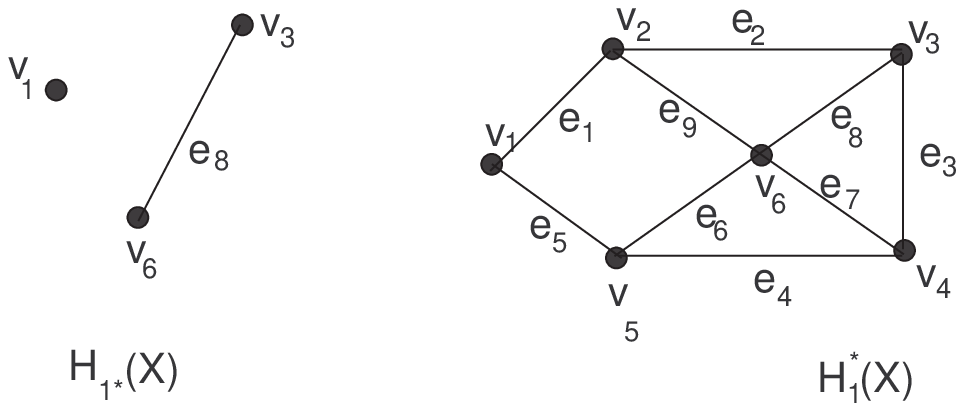}}
      \caption{}\label{fig16}
\end{figure}
Since $\mathcal{B} \subseteq \mathcal{A},$
$\mathcal{H}_{2*}(\mathcal{X})$ is a subgraph of
$\mathcal{H}_{1*}(\mathcal{X})$ and
 $\mathcal{H}_{2}^{*}(\mathcal{X})$ is
a subgraph of $\mathcal{H}_{1}^{*}(\mathcal{X})$. So,
$\tilde{G_{2}}$ is a soft rough subgraph of  $\tilde{G_{1}}$.
\end{Example}
\begin{Theorem}
Let
$\tilde{G_1}=\langle\mathfrak{F_{1*}},\,\mathfrak{F_{1}^{*}},\,\mathcal{K}
_{1*},\ \mathcal{K}_{1}^{*},\,\mathcal{A}, \mathcal{X}\rangle$ and
$\tilde{G_2}=\langle
\mathfrak{F_{2*}},\,\mathfrak{F_{2}^{*}},\,\mathcal{K}_{2*},\,
\mathcal{K}_{2}^{*},\,\mathcal{B}, \mathcal{X}\rangle$  be two soft
rough graphs of $G.$ Then $\tilde{G_2}$ is a soft rough subgraph of
$\tilde{G_1}$ if and only if $ \mathfrak{F}_{2*}(\mathcal{X})
\subseteq \mathfrak{F}_{1*}(\mathcal{X}),$
\,$\mathfrak{F}_{2}^{*}(\mathcal{X})\subseteq \mathfrak
{F}_{1}^{*}(\mathcal{X}),$ \,$\mathcal{K}_{2*}(\mathcal{X})\subseteq
\mathcal{K}_{1*}(\mathcal{X})$ and
$\mathcal{K}_{2}^{*}(\mathcal{X})\subseteq
\mathcal{K}_{1}^{*}(\mathcal{X})$.
\end{Theorem}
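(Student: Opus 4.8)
The plan is to prove both directions by directly unwinding the definition of a soft rough subgraph (the Definition preceding the theorem) together with the elementary notion of subgraph recalled earlier, namely that $H$ is a subgraph of $G$ exactly when $V(H)\subseteq V(G)$ and $E(H)\subseteq E(G)$. The key observation is that each of $\mathcal{H}_{2*}(\mathcal{X})$, $\mathcal{H}_{1*}(\mathcal{X})$, $\mathcal{H}_{2}^{*}(\mathcal{X})$, $\mathcal{H}_{1}^{*}(\mathcal{X})$ is an ordinary graph whose vertex set is the relevant $\mathfrak{F}$-approximation and whose edge set is the relevant $\mathcal{K}$-approximation. Hence containment of these graphs is equivalent to the simultaneous containment of their vertex sets and edge sets, and the whole theorem reduces to this equivalence applied twice (once at the lower level, once at the upper level).

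For the forward direction I would assume $\tilde{G_2}$ is a soft rough subgraph of $\tilde{G_1}$. By condition $(2)$ of the definition, $\mathcal{H}_{2*}(\mathcal{X})=(\mathfrak{F}_{2*}(\mathcal{X}),\mathcal{K}_{2*}(\mathcal{X}))$ is a subgraph of $\mathcal{H}_{1*}(\mathcal{X})=(\mathfrak{F}_{1*}(\mathcal{X}),\mathcal{K}_{1*}(\mathcal{X}))$; applying the definition of subgraph to this pair immediately yields $\mathfrak{F}_{2*}(\mathcal{X})\subseteq\mathfrak{F}_{1*}(\mathcal{X})$ and $\mathcal{K}_{2*}(\mathcal{X})\subseteq\mathcal{K}_{1*}(\mathcal{X})$. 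Running the same argument on condition $(3)$ for the upper approximations gives the two remaining inclusions $\mathfrak{F}_{2}^{*}(\mathcal{X})\subseteq\mathfrak{F}_{1}^{*}(\mathcal{X})$ and $\mathcal{K}_{2}^{*}(\mathcal{X})\subseteq\mathcal{K}_{1}^{*}(\mathcal{X})$.

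For the reverse direction I would assume the four inclusions and reconstruct the defining conditions. The inclusions $\mathfrak{F}_{2*}(\mathcal{X})\subseteq\mathfrak{F}_{1*}(\mathcal{X})$ and $\mathcal{K}_{2*}(\mathcal{X})\subseteq\mathcal{K}_{1*}(\mathcal{X})$ say precisely that the vertex set and the edge set of $\mathcal{H}_{2*}(\mathcal{X})$ lie inside those of $\mathcal{H}_{1*}(\mathcal{X})$, which is exactly condition $(2)$; the upper inclusions give condition $(3)$ in the same way. Since the approximations are by construction subsets of $V$ and $E$ respectively, no additional well-definedness check is needed, and the two reconstructed containments are genuine subgraph relations.

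The main obstacle is condition $(1)$, the parameter containment $\mathcal{B}\subseteq\mathcal{A}$, which is part of the definition of a soft rough subgraph but is \emph{not} among the four set inclusions listed in the theorem. I expect to have to address this separately: because every element of a lower or upper approximation is generated by some parameter, I would try to argue that the containments of the approximation sets force the containment of the generating parameter sets, or else observe that $\mathcal{B}\subseteq\mathcal{A}$ should be read as a standing hypothesis governing any comparison of two soft rough graphs of the same $G$ over the same $\mathcal{X}$. Pinning down this parameter condition, rather than the purely graph-theoretic equivalences, is the only delicate point of the argument.
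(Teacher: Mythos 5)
Your two main directions coincide with the paper's own proof: the forward implication reads the four inclusions off conditions (2) and (3) of the definition of soft rough subgraph via the elementary characterization of subgraph containment, and the converse reassembles those two subgraph relations from the four inclusions, exactly as the paper does.

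The one place you go beyond the paper is the ``delicate point'' you flag, and you are right to flag it: condition (1) of the definition, $\mathcal{B}\subseteq\mathcal{A}$, is not recoverable from the four set inclusions, and the paper's own converse argument simply never addresses it --- it verifies conditions (2) and (3) and then concludes that $\tilde{G_2}$ is a soft rough subgraph of $\tilde{G_1}$. However, the first repair you suggest (that containment of the approximation sets forces containment of the generating parameter sets) cannot succeed, because the approximations do not determine the parameters. A concrete counterexample inside the paper's own framework: let $G$ be the $4$-cycle on $v_1,v_2,v_3,v_4$ with $R$ the adjacency relation, and take $\mathcal{A}=\{v_1\}$, $\mathcal{B}=\{v_3\}$. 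Then $\mathfrak{F}_1(v_1)=\{v_2,v_4\}=\mathfrak{F}_2(v_3)$, so for any choice of $\mathcal{X}$ all four lower and upper approximations (over $V$ and over $E$) coincide and the four inclusions hold with equality, yet $\mathcal{B}\not\subseteq\mathcal{A}$, so $\tilde{G_2}$ is not a soft rough subgraph of $\tilde{G_1}$ under the stated definition. Hence the biconditional as literally stated is false; it becomes correct only under your second reading, namely that $\mathcal{B}\subseteq\mathcal{A}$ is taken as a standing hypothesis on the pair (or that condition (1) is removed from the definition). In short, your proposal reproduces the paper's argument and, unlike the paper, honestly isolates the gap in the ``only if'' direction; just discard your first candidate fix, since it is refuted by the example above.
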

\begin{proof}
Given $\tilde{G_1}$ and $\tilde{G_2}$ be two soft rough graphs of
$G.$ Suppose $\tilde{G_2}$ is a soft rough subgraph of
$\tilde{G_1}.$ Then
by the definition of soft rough subgraph:\\ $1)$ $ \mathcal{B} \subseteq \mathcal{A}.$ \\
$2)$
$\mathcal{H}_{2*}(\mathcal{X})=(\mathfrak{F}_{2*}(\mathcal{X}),\,\mathcal{K}_{2*}(\mathcal{X}))$
is a subgraph of $
\mathcal{H}_{1*}(\mathcal{X})=(\mathfrak{F}_{1*}(\mathcal{X}),\,\mathcal{K}_{1*}(\mathcal{X}))$.
\\ $3)$ $ \mathcal{H}_{2}^{*}(\mathcal{X})=(\mathfrak{F}_{2}^{*}(\mathcal{X}),\,\, \mathcal{K}_{2}^{*}(\mathcal{X}))$ is a subgraph of
 $\mathcal{H}_{1}^{*}(\mathcal{X})=(\mathfrak{F}_{1}^{*}(\mathcal{X}),\,\,\mathcal{K}_{1}^{*}(\mathcal{X}))$.

Since $\mathcal{H}_{2*}(\mathcal{X})$ is a subgraph of
$\mathcal{H}_{1*}(\mathcal{X})$ and $
\mathcal{H}_{2}^{*}(\mathcal{X})$ is a subgraph of
$\mathcal{H}_{1}^{*}(\mathcal{X})$. Thus
\,$\mathfrak{F}_{2*}(\mathcal{X})\subseteq
\mathfrak{F}_{1*}(\mathcal{X})$,\,
$\mathcal{K}_{2*}(\mathcal{X})\subseteq
\mathcal{K}_{1*}(\mathcal{X})$,\,
$\mathfrak{F}_{2}^{*}(\mathcal{X})\subseteq
\mathfrak{F}_{1}^{*}(\mathcal{X})$ and
$\mathcal{K}_{2}^{*}(\mathcal{X})\subseteq
\mathcal{K}_{1}^{*}(\mathcal{X})$.

Conversely suppose that $\mathfrak{F}_{2*}(\mathcal{X})\subseteq
\mathfrak{F}_{1*}(\mathcal{X})$,
$\mathcal{K}_{2*}(\mathcal{X})\subseteq
\mathcal{K}_{1*}(\mathcal{X})$, $\mathfrak{F}_{2}^{*}(\mathcal{X})
\subseteq \mathfrak{F}_{1}^{*}(\mathcal{X})$ and
$\mathcal{K}_{2}^{*}(\mathcal{X}) \subseteq
\mathcal{K}_{1}^{*}(\mathcal{X})$. Since $\tilde{G_1}$ is a soft
rough graph of $G,$ $\mathcal{H}_{1*}(\mathcal{X})$ and
$\mathcal{H}_{1}^{*}(\mathcal{X})$ be the subgraphs of $G$. Since
$\tilde{G_2}$ is a soft rough graph of $G,$
$\mathcal{H}_{2*}(\mathcal{X})$ and
$\mathcal{H}_{2}^{*}(\mathcal{X})$ be subgraphs of $G$. Thus
$\mathcal{H}_{2*}(\mathcal{X})=(\mathfrak{F}_{2*}(\mathcal{X}),\,\mathcal{K}_{2*}(\mathcal{X}))$
is a subgraph of $
\mathcal{H}_{1*}(\mathcal{X})=(\mathfrak{F}_{1*}(\mathcal{X}),\,\mathcal{K}_{1*}(\mathcal{X}))$
and $
\mathcal{H}_{2}^{*}(\mathcal{X})=(\mathfrak{F}_{2}^{*}(\mathcal{X}),\,\mathcal{K}_{2}^{*}(\mathcal{X}))$
is a subgraph of
$\mathcal{H}_{1}^{*}(\mathcal{X})=(\mathfrak{F}_{1}^{*}(\mathcal{X}),\,\mathcal{K}_{1}^{*}(\mathcal{X}))$.
Hence $\tilde{G_2}$ is a soft rough subgraph of
$\tilde{G_1}.$\end{proof}

\begin{Definition}Let
$\tilde{G}=\langle\mathfrak{F_{*}},\mathcal{K}_{*},\mathfrak{F}^{*},\mathcal{K}^{*},\mathcal{A},
\mathcal{X}\rangle$ be a soft rough graph of $G$. Then $\tilde{G}$
is called a soft rough tree if $\mathcal{H}_*(\mathcal{X})$ and
$\mathcal{H}^*(\mathcal{X})$ are trees.
\end{Definition}
\begin{figure}[!ht]
    \centerline
      {\includegraphics[width=5cm]{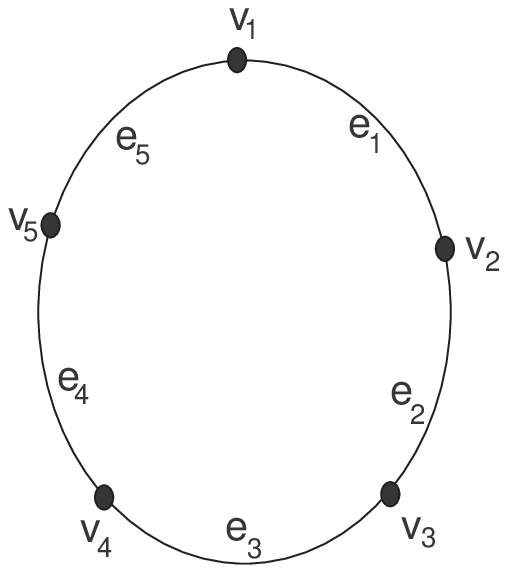}}
      \caption{}\label{treefig}
\end{figure}
\begin{Example}\label{tree}Let $G=(V,E)$ be a graph
with $V=\{v_1, v_2, v_3, v_4, v_5\}$ and $E=\{e_1,
e_2,e_3,e_4,e_5\}$ as shown in \textup{FIGURE \ref{treefig}}. Let
$\mathcal{A}=\{v_1, v_2\}$ be the set of parameters and
$\mathfrak{F}:\mathcal{A}\rightarrow P(V)$ is defined as
$$\mathfrak{F}(x)=\{y: xRy \Leftrightarrow d(x,y)= diam(G)\}.$$
Then $\mathfrak{F}(v_1)=\{v_3,v_4\}$,
$\mathfrak{F}(v_2)=\{v_4,v_5\}$. Let $\mathcal{X}=\{v_2,v_3,v_4\}$
then the lower and upper soft rough approximations over $V$ are as
follows:
$$\mathfrak{F}_*(X)=\{v_3, v_4\},\,\,\mathfrak{F}^*(X)=\{v_3, v_4,
v_5\},$$ respectively. The lower and upper soft rough approximations
over $E$ are as follows:
$$\mathcal{K}_*(X)=\{e_3\},\,\,\mathcal{K}^*(X)=\{e_3,e_4\},$$
respectively. As shown in \textup{FIGURE \ref{tree2}},
$\mathcal{H}_*(\mathcal{X})$ and $\mathcal{H}^*(\mathcal{X})$ are
trees. Hence $\tilde{G}$ is a soft rough tree.
\begin{figure}[!ht]
    \centerline
      {\includegraphics[width=7cm]{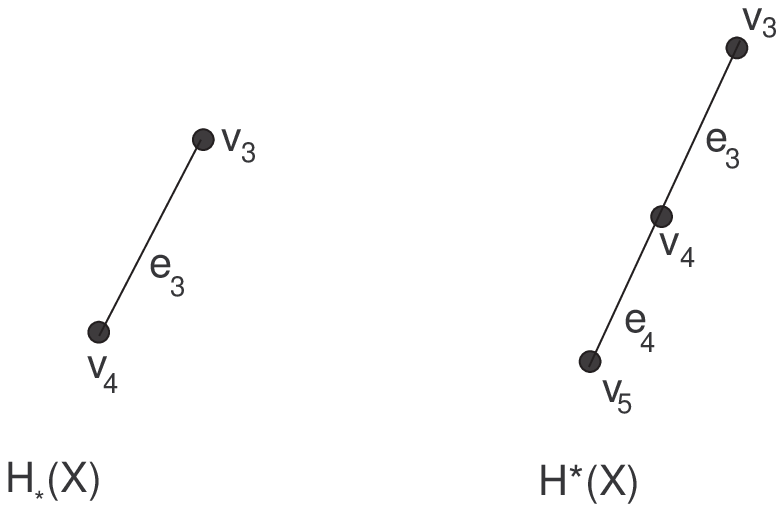}}
      \caption{}\label{tree2}
\end{figure}
\end{Example}
\begin{Lemma}Let $T$ be a tree. Then any soft rough graph $\tilde{G}$
of $T$ is a soft rough tree.
\end{Lemma}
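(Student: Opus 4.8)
The plan is to reduce the statement to a single structural fact about trees. By condition $6)$ in the definition of a soft rough graph, both $\mathcal{H}_*(\mathcal{X})=(\mathfrak{F}_*(\mathcal{X}),\mathcal{K}_*(\mathcal{X}))$ and $\mathcal{H}^*(\mathcal{X})=(\mathfrak{F}^*(\mathcal{X}),\mathcal{K}^*(\mathcal{X}))$ are subgraphs of $G=T$. By the definition of a soft rough tree it therefore suffices to prove that each of these two subgraphs is a tree; the argument for $\mathcal{H}_*(\mathcal{X})$ and for $\mathcal{H}^*(\mathcal{X})$ will be identical, so I would carry it out once for a generic subgraph $H$ of $T$.

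First I would establish acyclicity. If $H$ contained a cycle $C$, then every vertex and edge of $C$ would also be a vertex and edge of $T$, so $C$ would be a cycle of $T$; this contradicts the fact that $T$, being a tree, is acyclic. Hence $H$ has no cycle, i.e. $H$ is a forest. This step is routine and uses only that $T$ is acyclic.

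The main obstacle is connectedness, and I would treat it explicitly rather than gloss over it. Being a subgraph of a tree forces acyclicity but not connectivity: a subgraph of $T$ is in general only a forest, and one can choose soft rough data on a path so that $\mathcal{H}^*(\mathcal{X})$ breaks into two disjoint edges. Consequently the conclusion that $\tilde{G}$ is a soft rough tree holds as stated precisely when ``tree'' is read as ``acyclic graph (forest),'' in which case the acyclicity step already finishes the proof. If instead one insists on the strict sense, namely connected and acyclic, then the extra work needed is to use the explicit construction of $\mathfrak{F}$, $\mathcal{K}$ and the approximation operators from the relation $R$ to show that, for any two vertices retained in $\mathfrak{F}^*(\mathcal{X})$ (respectively $\mathfrak{F}_*(\mathcal{X})$), the unique $T$-path joining them is entirely retained in $\mathcal{K}^*(\mathcal{X})$ (respectively $\mathcal{K}_*(\mathcal{X})$); this is the delicate point and, absent further hypotheses on $R$, is exactly where the strict statement can fail. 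Granting the appropriate reading, I would then combine acyclicity with connectedness to conclude that $\mathcal{H}_*(\mathcal{X})$ and $\mathcal{H}^*(\mathcal{X})$ are trees, and hence that $\tilde{G}$ is a soft rough tree.
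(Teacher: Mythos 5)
Your proposal is sound, and it is in fact more careful than the paper's own proof, which consists of a single sentence: since $T$ is a tree, $\mathcal{H}_*(\mathcal{X})$ and $\mathcal{H}^*(\mathcal{X})$ are trees. In other words, the paper silently invokes the claim ``every subgraph of a tree is a tree'' without argument. Your reduction via condition $6)$ of the definition of soft rough graph, together with your acyclicity argument, is exactly the content that assertion needs; and your warning about connectedness identifies a genuine defect that the paper never addresses. Your suspicion can be made concrete, even with the relation $R$ an equivalence relation: let $T$ be the path $v_1v_2v_3v_4v_5$ with edges $e_i=v_iv_{i+1}$, let $R$ have classes $\{v_1,v_2\},\{v_3\},\{v_4,v_5\}$, take $\mathcal{A}=\{v_1,v_4\}$, so $\mathfrak{F}(v_1)=\{v_1,v_2\}$, $\mathfrak{F}(v_4)=\{v_4,v_5\}$, $\mathcal{K}(v_1)=\{e_1\}$, $\mathcal{K}(v_4)=\{e_4\}$, and take $\mathcal{X}=\{v_1,v_2,v_4\}$. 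Then $\mathfrak{F}_*(\mathcal{X})=\{v_1,v_2\}\neq\{v_1,v_2,v_4,v_5\}=\mathfrak{F}^*(\mathcal{X})$ and $\mathcal{K}_*(\mathcal{X})=\{e_1\}\neq\{e_1,e_4\}=\mathcal{K}^*(\mathcal{X})$, so all conditions of a soft rough graph of $T$ are met, yet $\mathcal{H}^*(\mathcal{X})=(\{v_1,v_2,v_4,v_5\},\{e_1,e_4\})$ is two disjoint edges --- precisely the configuration you predicted --- hence a forest but not a tree. So the lemma is true only under the reading ``tree $=$ acyclic graph,'' which your acyclicity step fully proves, and it is false under the strict reading (connected and acyclic); no proof could close that gap without extra hypotheses on $R$, $\mathcal{A}$ and $\mathcal{X}$. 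In short: where the paper asserts, you prove what is provable and correctly isolate what fails.
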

\begin{proof} Since $T$ is a tree so for any soft rough graph
$\tilde{G}=\langle\mathfrak{F_{*}},\,\mathfrak{F^{*}},\,\mathcal{K}
_{*},\ \mathcal{K}^{*},\,\mathcal{A}, \mathcal{X}\rangle$ of $T$,
$\mathcal{H}_*(\mathcal{X})$ and $\mathcal{H}^*(\mathcal{X})$ are
trees.
\end{proof}
Note that converse of above Lemma is not true in general. As in
Example \ref{tree}, $\mathcal{H}_*(\mathcal{X})$ and
$\mathcal{H}^*(\mathcal{X})$ are trees but $G$
 is a cycle of order $5.$
 \begin{Remark}Let
$\tilde{G_1}=\langle\mathfrak{F_{1*}},\,\mathfrak{F_{1}^{*}},\,\mathcal{K}
_{1*},\ \mathcal{K}_{1}^{*},\,\mathcal{A}, \mathcal{X}\rangle$ be a
soft rough tree of $G$. If $\tilde{G_2}=\langle
\mathfrak{F_{2*}},\,\mathfrak{F_{2}^{*}},\,\mathcal{K}_{2*},\
\mathcal{K}_{2}^{*},\,\mathcal{B}, \mathcal{Y}\rangle$ is a soft
rough subgraph of $\tilde{G_1}$ then $\tilde{G_2}$ is a soft rough
tree of $G$.
\end{Remark}
\begin{Definition}
Let
$\tilde{G_1}=\langle\mathfrak{F}_{1*},\,\mathfrak{F}_{1}^{*},\,\mathcal{K}_{1*},\,
\mathcal{K}_{1}^{*},\,\mathcal{A},\, \mathcal{X}\rangle$ and
$\tilde{G_2}=\langle\mathfrak{F}_{2*},\,\mathfrak{F}_{2}^{*},\,\mathcal{K}_{2*},\,
\mathcal{K}_{2}^{*},\,\mathcal{B},\, \mathcal{Y}\rangle$  be two
soft rough graphs of $G.$ Then AND operation of $\tilde{G_1}$ and
$\tilde{G_2}$ is denoted by $ \tilde{G_1} \curlywedge \tilde{G_2}$
and is defined as
 \[\tilde{G_{1}} \curlywedge \tilde{G_{2}}=\langle\mathfrak{F}_{*},
 \mathcal{K}_{*},\,\mathfrak{F}^{*},\,\mathcal{K}^{*},
  \,\mathcal{A}\times \mathcal{B}, \mathcal{X} \times
 \mathcal{Y}\rangle,\] where
$\mathfrak{F}_{*}(\mathcal{X} \times
\mathcal{Y})=\mathfrak{F}_{1*}(\mathcal{X})\cap
\mathfrak{F}_{2*}(\mathcal{Y}),$ $\mathcal{K}_{*}(\mathcal{X} \times
\mathcal{Y})=\mathcal{K}_{1*}(\mathcal{X})\cap
\mathcal{K}_{2*}(\mathcal{Y}),$ $\mathfrak{F}^{*}(\mathcal{X} \times
\mathcal{Y})=\mathfrak{F}_{1}^{*}(\mathcal{X})\cap
\mathfrak{F}_{2}^{*}(\mathcal{Y})$ and $\mathcal{K}^{*}(\mathcal{X}
\times \mathcal{Y})=\mathcal{K}_{1}^{*}(\mathcal{X})\cap
\mathcal{K}_{2}^{*}(\mathcal{Y}).$
\end{Definition}
\begin{Definition}
 Let
$\tilde{G_1}=\langle\mathfrak{F}_{1*},\,\mathfrak{F}_{1}^{*},\,\mathcal{K}_{1*},\,
\mathcal{K}_{1}^{*},\,\mathcal{A},\, \mathcal{X}\rangle$ and
$\tilde{G_2}=\langle\mathfrak{F}_{2*},\,\mathfrak{F}_{2}^{*},\,\mathcal{K}_{2*},\,
\mathcal{K}_{2}^{*},\,\mathcal{B},\, \mathcal{Y}\rangle$  be two
soft rough graphs of $G.$ Then OR operation of $\tilde{G_1}$ and
$\tilde{G_2}$ is denoted by $ \tilde{G_1} \curlyvee \tilde{G_2}$ and
is defined as
\[\tilde{G_{1}} \curlyvee \tilde{G_{2}}=\langle\mathfrak{F}_{*},
 \mathcal{K}_{*},\,\mathfrak{F}^{*},\,\mathcal{K}^{*},
  \,\mathcal{A}\times \mathcal{B},\, \mathcal{X} \times
 \mathcal{Y}\rangle,\] where
$\mathfrak{F}_{*}(\mathcal{X} \times
\mathcal{Y})=\mathfrak{F}_{1*}(\mathcal{X})\cup
\mathfrak{F}_{2*}(\mathcal{Y}),$ $\mathcal{K}_{*}(\mathcal{X} \times
\mathcal{Y})=\mathcal{K}_{1*}(\mathcal{X})\cup
\mathcal{K}_{2*}(\mathcal{Y}),$ $\mathfrak{F}^{*}(\mathcal{X} \times
\mathcal{Y})=\mathfrak{F}_{1}^{*}(\mathcal{X})\cup
\mathfrak{F}_{2}^{*}(\mathcal{Y})$ and $\mathcal{K}^{*}(\mathcal{X}
\times \mathcal{Y})=\mathcal{K}_{1}^{*}(\mathcal{X})\cup
\mathcal{K}_{2}^{*}(\mathcal{Y}).$
\end{Definition}
\begin{Example} Let $G=(V,E)$ be a graph with $V=\{v_{1}, v_{2}, v_{3}, v_{4}, v_{5}\}$
and $E=\{e_{1}, e_{2}, e_{3}, e_{4}, e_{5}, e_{6}, e_{7}, e_{8}\}$
as shown in \textup{FIGURE \ref{fig28}}. Let $\mathcal{A}=\{v_{2},
v_{5}\}$ be the set of parameters. We define an approximate function
$\mathfrak{F}_{1}: \mathcal{A} \rightarrow P(V)$ as
$$\mathfrak{F}_{1}(x)=\{y \in V:  xRy \Leftrightarrow y\in N(x)\}.$$
Then $\mathfrak{F} _{1}(v_{2})=\{v_{1}, v_{3}, v_{5}\},$
$\mathfrak{F}_{1}(v_{5})=\{v_{1}, v_{2}, v_{3}, v_{4}\}.$
\begin{figure}[!ht]
    \centerline
      {\includegraphics[width=4cm]{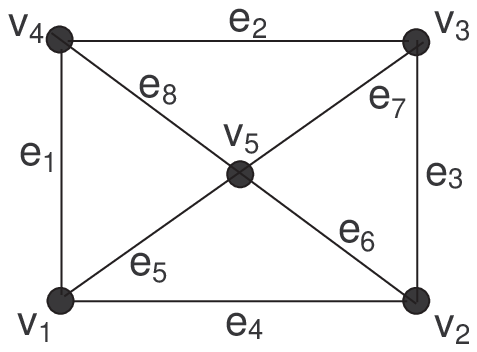}}
      \caption{}\label{fig28}
\end{figure}
Let $\mathcal{X}=\{v_{1}, v_{2}, v_{3}, v_{5}\}$ be the subset of
$V.$ The lower and upper soft rough approximations over $V$ are as
follows:
$$\mathfrak{F}_{1*}(\mathcal{X})=\{v_{1}, v_{3}, v_{5}\},\,\,\mathfrak{F}_{1}^{*}(\mathcal{X})=\{v_{1}, v_{2}, v_{3}, v_{4}, v_{5}\}$$
respectively. The lower and upper soft rough approximations over $E$
are as follows:
$$\mathcal{K}_{1*}(\mathcal{X})=\{e_{5}, e_{7}\},\,\,\mathcal{K}_{1}^{*}(\mathcal{X})=\{e_{1}, e_{2}, e_{3}, e_{4},
e_{5},e_6, e_{7},e_8\}$$ respectively. So,
$\tilde{G}_{1}=\{\mathcal{H}_{1*}(\mathcal{X}),
\mathcal{H}_{1}^{*}(\mathcal{X})\}$ be the soft rough graph of $G.$
Let $\mathcal{B}=\{v_{3}, v_{4}\}$ be the set of parameters and
$\mathcal{Y}=\{v_{1}, v_{2}, v_{4}, v_{5}\}$ be a subset of $V.$
 We define an approximate
function\, $\mathfrak{F}_{2}: \mathcal{B} \rightarrow P(V)$ as
$$\mathfrak{F}_{2}(x)=\{y \in V:  xRy \Leftrightarrow y\in N(x)\}.$$
Then $\mathfrak{F}_{2}(v_{3})=\{v_{2}, v_{4}, v_{5}\},$
$\mathfrak{F}_{2}(v_{4})=\{v_{1}, v_{3}, v_{5}\}.$ The lower and
upper soft rough approximations over $V$ are as follows:
$$\mathfrak{F}_{2*}(\mathcal{Y})=\{v_{2}, v_{4}, v_{5}\},\,\,\mathfrak{F}_{2}^{*}(\mathcal{Y})=\{v_{1}, v_{3}, v_{4}, v_{5}\}$$
respectively. The lower and upper soft rough approximations over $E$
are as follows:
$$\mathcal{K}_{2*}(\mathcal{Y})=\{e_{6}, e_{8}\},\,\,\mathcal{K}_{2}^{*}(\mathcal{Y})=\{e_{1}, e_{2}, e_{5}, e_{7},
e_{8}\}$$ respectively. So,
$\tilde{G}_{2}=\{\mathcal{H}_{2*}(\mathcal{Y}),
\mathcal{H}_{2}^{*}(\mathcal{Y})\}$ be the soft rough graph of $G.$
Now the AND operation of $\tilde{G}_{1}$ and
$\tilde{G}_{2}$ is given as follows:\\
$\mathfrak{F}_{*}(\mathcal{X} \times
\mathcal{Y})=\mathfrak{F}_{1*}(\mathcal{X}) \cap
\mathfrak{F}_{2*}(\mathcal{Y})=\{v_{5}\}$,
$\mathcal{K}_{*}(\mathcal{X} \times
\mathcal{Y})=\mathcal{K}_{1*}(\mathcal{X}) \cap
\mathcal{K}_{2*}(\mathcal{Y})=\emptyset$,
$\mathfrak{F}^{*}(\mathcal{X} \times
\mathcal{Y})=\mathfrak{F}_{1}^{*}(\mathcal{X}) \cap
\mathfrak{F}_{2}^{*}(\mathcal{Y})=\{v_{1}, v_{3}, v_{4}, v_{5}\}$,
$\mathcal{K}^{*}(\mathcal{X} \times
\mathcal{Y})=\mathcal{K}_{1}^{*}(\mathcal{X}) \cap
\mathcal{K}_{2}^{*}(\mathcal{Y})=\{e_{1}, e_{2}, e_{5},
e_{7},e_8\}.$\par Now the OR operation of $\tilde{G}_{1}$ and
$\tilde{G}_{2}$ is given as follows:\\ $\mathfrak{F}_{*}(\mathcal{X}
\times \mathcal{Y})=\mathfrak{F}_{1*}(\mathcal{X}) \cup
\mathfrak{F}_{2*}(\mathcal{Y})=\{v_{1}, v_{2}, v_{3}, v_{4},
v_{5}\}$, $\mathcal{K}_{*}(\mathcal{X} \times
\mathcal{Y})=\mathcal{K}_{1*}(\mathcal{X}) \cup
\mathcal{K}_{2*}(\mathcal{Y})=\{e_{5}, e_{6}, e_{7}, e_{8} \}$,
$\mathfrak{F}^{*}(\mathcal{X} \times
\mathcal{Y})=\mathfrak{F}_{1}^{*}(\mathcal{X}) \cup
\mathfrak{F}_{2}^{*}(\mathcal{Y})=\{v_{1}, v_{2}, v_{3}, v_{4},
v_{5}\}$, $\mathcal{K}^{*}(\mathcal{X} \times
\mathcal{Y})=\mathcal{K}_{1}^{*}(\mathcal{X}) \cup
\mathcal{K}_{2}^{*}(\mathcal{Y})=\{e_{1}, e_{2}, e_{3}, e_{4},
e_{5}, e_{6}, e_{7}, e_{8}\}.$
\end{Example}
\begin{Theorem}
Let
$\tilde{G_1}=\langle\mathfrak{F_{1*}},\,\mathfrak{F_{1}^{*}},\,\mathcal{K}
_{1*},\ \mathcal{K}_{1}^{*},\,\mathcal{A}, \mathcal{X}\rangle$ and
\,$\tilde{G_2}=\langle
\mathfrak{F_{2*}},\,\mathfrak{F_{2}^{*}},\,\mathcal{K}_{2*},\
\mathcal{K}_{2}^{*},\,\mathcal{B}, \mathcal{Y}\rangle$  be two soft
rough graphs of $G$ such that $\mathfrak{F}_{1*}(\mathcal{X}) \cap
\mathfrak{F}_{2*}(\mathcal{Y}) \neq \emptyset,$
$\mathfrak{F}_{1}^{*}(\mathcal{X}) \cap
\mathfrak{F}_{2}^{*}(\mathcal{Y}) \neq \emptyset,$
$\mathcal{K}_{1*}(\mathcal{X}) \cap \mathcal{K}_{2*}(\mathcal{Y})
\neq \emptyset$ and $\mathcal{K}_{1}^{*}(\mathcal{X}) \cap
\mathcal{K}_{2}^{*}(\mathcal{Y}) \neq \emptyset.$ Then their AND
operation is a soft rough graph of $G.$
\end{Theorem}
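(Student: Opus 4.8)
The plan is to verify directly that $\tilde{G_1}\curlywedge\tilde{G_2}$ satisfies all six conditions in the definition of a soft rough graph. The first three are essentially free: $G=(V,E)$ is simple by hypothesis; the parameter set $\mathcal{A}\times\mathcal{B}$ is non-empty because $\mathcal{A}$ and $\mathcal{B}$ are; and $\mathcal{X}\times\mathcal{Y}$ is the non-empty subset on which the approximations of the AND operation are taken. The four non-emptiness assumptions in the statement guarantee that none of the four intersecting sets $\mathfrak{F}_{*}(\mathcal{X}\times\mathcal{Y})$, $\mathfrak{F}^{*}(\mathcal{X}\times\mathcal{Y})$, $\mathcal{K}_{*}(\mathcal{X}\times\mathcal{Y})$, $\mathcal{K}^{*}(\mathcal{X}\times\mathcal{Y})$ collapses to the empty set, so each approximation is a genuine set.

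Next I would check conditions $4$ and $5$, namely that the vertex pair and the edge pair are soft rough sets. For this it suffices to confirm that each lower approximation is contained in the corresponding upper one. Since $\tilde{G_1}$ and $\tilde{G_2}$ are themselves soft rough graphs we have $\mathfrak{F}_{1*}(\mathcal{X})\subseteq\mathfrak{F}_{1}^{*}(\mathcal{X})$ and $\mathfrak{F}_{2*}(\mathcal{Y})\subseteq\mathfrak{F}_{2}^{*}(\mathcal{Y})$; intersecting these two inclusions gives $\mathfrak{F}_{*}(\mathcal{X}\times\mathcal{Y})\subseteq\mathfrak{F}^{*}(\mathcal{X}\times\mathcal{Y})$. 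The identical argument applied to the edge approximations yields $\mathcal{K}_{*}(\mathcal{X}\times\mathcal{Y})\subseteq\mathcal{K}^{*}(\mathcal{X}\times\mathcal{Y})$, so both pairs are soft rough sets, over $V$ and over $E$ respectively.

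The only step requiring a little care is condition $6$: that $\mathcal{H}_{*}(\mathcal{X}\times\mathcal{Y})$ and $\mathcal{H}^{*}(\mathcal{X}\times\mathcal{Y})$ are genuine subgraphs of $G$, i.e. that every retained edge keeps both of its endpoints in the retained vertex set. Here I would argue at the level of a single edge. Take any $e\in\mathcal{K}_{*}(\mathcal{X}\times\mathcal{Y})=\mathcal{K}_{1*}(\mathcal{X})\cap\mathcal{K}_{2*}(\mathcal{Y})$. Because $\mathcal{H}_{1*}(\mathcal{X})=(\mathfrak{F}_{1*}(\mathcal{X}),\mathcal{K}_{1*}(\mathcal{X}))$ is a subgraph of $G$, both endpoints of $e$ lie in $\mathfrak{F}_{1*}(\mathcal{X})$; since $\mathcal{H}_{2*}(\mathcal{Y})$ is a subgraph of $G$, they also lie in $\mathfrak{F}_{2*}(\mathcal{Y})$. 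Hence both endpoints lie in $\mathfrak{F}_{1*}(\mathcal{X})\cap\mathfrak{F}_{2*}(\mathcal{Y})=\mathfrak{F}_{*}(\mathcal{X}\times\mathcal{Y})$, so $\mathcal{H}_{*}(\mathcal{X}\times\mathcal{Y})$ is a subgraph of $G$. Replacing every lower approximation by the corresponding upper approximation gives the same conclusion for $\mathcal{H}^{*}(\mathcal{X}\times\mathcal{Y})$. Having verified all six conditions, the AND operation is a soft rough graph of $G$.

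I expect condition $6$ to be the main obstacle, since it is the one place where the graph structure, namely the incidence between edges and their endpoints, rather than mere set-theoretic monotonicity, is actually used. Everything else reduces to the elementary facts that intersection preserves inclusions and that non-emptiness of the four approximations is precisely what the hypotheses supply.
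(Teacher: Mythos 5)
Your proof is correct and takes essentially the same route as the paper's: both verify the definitional conditions for a soft rough graph, with the crux being that the intersection of two subgraphs of $G$ is again a subgraph of $G$. In fact you make explicit the endpoint argument for condition $6$ (and the inclusion $\mathfrak{F}_{*}(\mathcal{X}\times\mathcal{Y})\subseteq\mathfrak{F}^{*}(\mathcal{X}\times\mathcal{Y})$ for conditions $4$ and $5$) that the paper simply asserts, so your write-up is, if anything, more complete than the original.
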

\begin{proof} Since $\tilde{G_{1}}$ is a soft rough graph of $G,$
$\mathcal{H}_{1*}(\mathcal{X})=(\mathfrak{F}_{1*}(\mathcal{X}),\,\mathcal{K}_{1*}(\mathcal{X}))$
and
$\mathcal{H}_{1}^*(\mathcal{X})=(\mathfrak{F}_{1}^{*}(\mathcal{X}),\,\mathcal{K}_{1}^{*}(\mathcal{X}))$
are subgraphs of $G.$ Since $\tilde{G_{2}}$ is a soft rough graph of
$G,$
$\mathcal{H}_{2*}(\mathcal{X})=(\mathfrak{F}_{2*}(\mathcal{X}),\,\mathcal{K}_{2*}(\mathcal{X}))$
and
$\mathcal{H}_{2}^*(\mathcal{X})=(\mathfrak{F}_{2}^{*}(\mathcal{X}),\,\mathcal{K}_{2}^{*}(\mathcal{X}))$
are subgraphs of $G.$ Let $(\mathfrak{F}_{*}(\mathcal{X} \times
\mathcal{Y}),\, \mathcal{K}_{*} (\mathcal{X} \times
\mathcal{Y}))=(\mathfrak{F}_{1*}(\mathcal{X})\,\cap\,\mathfrak{F}_{2*}(\mathcal{Y}),\mathcal{K}_{1*}(\mathcal{X})\,\cap\,\mathcal{K}_{2*}(\mathcal{Y}))$
and $(\mathfrak{F^{*}}(\mathcal{X} \times \mathcal{Y}
),\,\mathcal{K^{*}}(\mathcal{X} \times \mathcal{Y}))=
(\mathfrak{F}_{1}^{*}(\mathcal{X})\,\cap\,\mathfrak{F}_{2}^{*}(\mathcal{Y}),\,
\mathcal{K}_{1}^{*}(\mathcal{X})\,
\cap\,\mathcal{K}_{2}^{*}(\mathcal{Y})).$ Since
$(\mathfrak{F_{1*}}(\mathcal{X}),\,\mathcal{K}_{1*}(\mathcal{X})),$
$(\mathfrak{F}_{2*}(\mathcal{Y}),\mathcal{K}_{2*}(\mathcal{Y})),$
$(\mathfrak{F}_{1}^{*}(\mathcal{X}),\,\mathcal{K}_{1}^{*}(\mathcal{X})),$,
$(\mathfrak{F_{2}^{*}}(\mathcal{Y}),\,
\mathcal{K}_{2}^{*}(\mathcal{Y}))$ be the subgraphs of $G$ and by
assumption $\mathfrak{F}_{1*}(\mathcal{X}) \cap
\mathfrak{F}_{2*}(\mathcal{Y}) \neq \emptyset$,
$\mathfrak{F}_{1}^{*}(\mathcal{X}) \cap
\mathfrak{F}_{2}^{*}(\mathcal{Y})\neq \emptyset,$
$\mathcal{K}_{1*}(\mathcal{X}) \cap \mathcal{K}_{2*}(\mathcal{Y})
\neq \emptyset$ and $\mathcal{K}_{1}^{*}(\mathcal{X}) \cap
\mathcal{K}_{2}^{*}(\mathcal{Y}) \neq \emptyset.$ So,
$(\mathfrak{F_{*}}(\mathcal{X} \times
\mathcal{Y}),\,\mathcal{K}_{*}(\mathcal{X} \times \mathcal{Y}))$ and
$(\mathfrak{F}^{*}(\mathcal{X} \times
\mathcal{Y}),\,\mathcal{K}^{*}(\mathcal{X} \times \mathcal{Y}))$\,
are subgraphs of $G.$ Hence $\tilde{G_{1}} \curlywedge
\tilde{G_{2}}$ be a soft rough graph of $G.$\end{proof}
\begin{Theorem}Let
$\tilde{G_1}=\langle\mathfrak{F_{1*}},\,\mathfrak{F_{1}^{*}},\,\mathcal{K}
_{1*},\ \mathcal{K}_{1}^{*},\,\mathcal{A}, \mathcal{X}\rangle$ and
\,$\tilde{G_2}=\langle
\mathfrak{F_{2*}},\,\mathfrak{F_{2}^{*}},\,\mathcal{K}_{2*},\
\mathcal{K}_{2}^{*},\,\mathcal{B},\,\mathcal{Y}\rangle$  be two soft
rough graphs of \, $G$ such that $\mathfrak{F}_{1*}(\mathcal{X})
\cup \mathfrak{F}_{2*}(\mathcal{Y}) \neq \emptyset,$
$\mathfrak{F}_{1}^{*}(\mathcal{X}) \cup
\mathfrak{F}_{2}^{*}(\mathcal{Y}) \neq \emptyset,$
$\mathcal{K}_{1*}(\mathcal{X}) \cup \mathcal{K}_{2*}(\mathcal{Y})
\neq \emptyset$ and $\mathcal{K}_{1}^{*}(\mathcal{X}) \cup
\mathcal{K}_{2}^{*}(\mathcal{Y}) \neq \emptyset.$ Then their OR
operation is a soft rough graph of $G.$\end{Theorem}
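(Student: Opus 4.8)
The plan is to follow the same template used for the AND operation, since the OR operation is its exact dual with $\cap$ replaced by $\cup$. To show that $\tilde{G_1} \curlyvee \tilde{G_2}$ is a soft rough graph of $G$, I must verify the six conditions in the definition of a soft rough graph; the only substantive one is condition $(6)$, namely that $\mathcal{H}_{*}(\mathcal{X} \times \mathcal{Y}) = (\mathfrak{F}_{*}(\mathcal{X} \times \mathcal{Y}),\, \mathcal{K}_{*}(\mathcal{X} \times \mathcal{Y}))$ and $\mathcal{H}^{*}(\mathcal{X} \times \mathcal{Y}) = (\mathfrak{F}^{*}(\mathcal{X} \times \mathcal{Y}),\, \mathcal{K}^{*}(\mathcal{X} \times \mathcal{Y}))$ are subgraphs of $G$, where by the definition of the OR operation these pairs are the componentwise unions of the corresponding pairs for $\tilde{G_1}$ and $\tilde{G_2}$.

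First I would invoke the hypothesis that $\tilde{G_1}$ and $\tilde{G_2}$ are soft rough graphs to record that $\mathcal{H}_{1*}(\mathcal{X})$, $\mathcal{H}_{1}^{*}(\mathcal{X})$, $\mathcal{H}_{2*}(\mathcal{Y})$ and $\mathcal{H}_{2}^{*}(\mathcal{Y})$ are all subgraphs of $G$. Next I would consider the lower pair $(\mathfrak{F}_{1*}(\mathcal{X}) \cup \mathfrak{F}_{2*}(\mathcal{Y}),\, \mathcal{K}_{1*}(\mathcal{X}) \cup \mathcal{K}_{2*}(\mathcal{Y}))$ and check directly that it is a subgraph of $G$: its vertex set and edge set are unions of subsets of $V$ and $E$ respectively, so they lie in $V$ and $E$, and the non-emptiness hypotheses on the unions guarantee the pair is not the empty object. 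The identical argument applies verbatim to the upper pair $(\mathfrak{F}_{1}^{*}(\mathcal{X}) \cup \mathfrak{F}_{2}^{*}(\mathcal{Y}),\, \mathcal{K}_{1}^{*}(\mathcal{X}) \cup \mathcal{K}_{2}^{*}(\mathcal{Y}))$.

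The one step that actually requires a small argument, and the place where the paper's AND proof is terse, is the incidence check: a pair $(V', E')$ is a genuine subgraph only if every edge of $E'$ has both endpoints in $V'$. Here I would note that any $e \in \mathcal{K}_{1*}(\mathcal{X}) \cup \mathcal{K}_{2*}(\mathcal{Y})$ lies in $\mathcal{K}_{1*}(\mathcal{X})$ or in $\mathcal{K}_{2*}(\mathcal{Y})$; in the former case its endpoints lie in $\mathfrak{F}_{1*}(\mathcal{X})$ because $\mathcal{H}_{1*}(\mathcal{X})$ is a subgraph of $G$, and in the latter case they lie in $\mathfrak{F}_{2*}(\mathcal{Y})$; in either case the endpoints belong to $\mathfrak{F}_{1*}(\mathcal{X}) \cup \mathfrak{F}_{2*}(\mathcal{Y})$. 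Hence the union pair is a legitimate subgraph of $G$, and the same reasoning handles the upper approximations.

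Finally I would assemble the conclusion: conditions $(1)$--$(5)$ hold because $G$ is simple, $\mathcal{A} \times \mathcal{B}$ and $\mathcal{X} \times \mathcal{Y}$ are non-empty, and the union approximations are soft rough sets over $V$ and $E$ by construction, while condition $(6)$ is exactly what was verified above. Therefore $\tilde{G_1} \curlyvee \tilde{G_2}$ is a soft rough graph of $G$. I do not anticipate any genuine obstacle, as the argument is dual to the AND case; the only point to state with care is the incidence condition for the union of edge sets, which is precisely where the proof uses that the two summands were already subgraphs rather than arbitrary vertex and edge sets.
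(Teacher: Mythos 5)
Your proposal is correct, but it is worth noting that the paper's own ``proof'' of this theorem consists of the single sentence ``The proof is obvious,'' implicitly deferring to the argument given for the AND operation. So your write-up is not so much a different route as the route the paper gestures at, actually carried out --- and carried out more carefully than the paper does even in the AND case. In particular, the incidence check you isolate (every edge of $\mathcal{K}_{1*}(\mathcal{X}) \cup \mathcal{K}_{2*}(\mathcal{Y})$ lies in one of the two edge sets, hence has both endpoints in the corresponding vertex set, hence in $\mathfrak{F}_{1*}(\mathcal{X}) \cup \mathfrak{F}_{2*}(\mathcal{Y})$) is the only step with real content, and it is exactly the step the paper never writes down anywhere: its AND proof merely asserts that the intersection pairs ``are subgraphs of $G$'' without verifying that edges keep their endpoints. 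Your observation that the union case works precisely because each edge inherits \emph{both} endpoints from whichever subgraph contributed it is the correct justification, and it is what distinguishes a union of subgraphs (always a subgraph) from a union of arbitrary vertex/edge pairs. The only cosmetic caveat is your closing claim that the union approximations ``are soft rough sets over $V$ and $E$ by construction''; strictly speaking a union of two soft rough sets need not arise as the approximation of any single set, but the paper's definitions of the AND/OR operations take the same liberty, so your proof sits at or above the paper's level of rigor throughout.
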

\begin{proof}
The proof is obvious.\end{proof} The Cartesian product of two graphs
$G_1=(V_1, E_1)$ and $G_2=(V_2, E_2)$ is a graph $G=G_1\times
G_2=(V, E)$ where the vertex set of $G$ is the cartesian product of
$V_1$ and $V_2$ and edge set is defined as $E=\{(u, v),\,(u, w)\,|\,
u \in V_1, (v, w) \in E_2\} \cup \{(y ,x),\, (z, x) \,|\, x \in V_2,
(y, z) \in E_1 \}$.
 \begin{Definition}
 Let $\tilde{G_1}=\langle\mathfrak{F}_{1*},\,\mathfrak{F}_{1}^{*},\,\mathcal{K}_{1*},\,
\mathcal{K}_{1}^{*},\,\mathcal{A},\, \mathcal{X}\rangle$ and
$\tilde{G_2}=\langle\mathfrak{F}_{2*},\,\mathfrak{F}_{2}^{*},\,\mathcal{K}_{2*},\,
\mathcal{K}_{2}^{*},\,\mathcal{B},\, \mathcal{Y}\rangle$  be soft
rough graphs of $G_{1}$ and $G_{2}$ respectively. The Cartesian
product of $\tilde{G_1}$ and $\tilde{G_2}$ is denoted by $
\tilde{G_1} \ltimes \tilde{G_2}$ and is defined as
 $$ \tilde{G_{1}} \ltimes \tilde{G_{2}}=\langle\mathcal{L}_{*},
  \mathcal{L}^{*}, \, \mathcal{A}\times \mathcal{B},\,\mathcal{X} \times \mathcal{Y}\rangle,$$ where
$\mathcal{L}_{*}(\mathcal{X} \times
\mathcal{Y})=\mathcal{H}_{1*}(\mathcal{X}) \times
\mathcal{H}_{2*}(\mathcal{Y})$, $\mathcal{L}^{*}(\mathcal{X} \times
\mathcal{Y})=\mathcal{H}_{1}^{*}(\mathcal{X}) \times
\mathcal{H}_{2}^{*}(\mathcal{Y})$ and $\mathcal{H}_{1*}(\mathcal{X})
\times \mathcal{H}_{2*}(\mathcal{Y}),$
$\mathcal{H}_{1}^{*}(\mathcal{X}) \times
\mathcal{H}_{2}^{*}(\mathcal{Y})$ be the Cartesian product of
subgraphs $\mathcal{H}_{1*}(\mathcal{X})$,
$\mathcal{H}_{2*}(\mathcal{Y})$, $\mathcal{H}_{1}^*(\mathcal{X})$
and $\mathcal{H}_{2}^*(\mathcal{Y})$.
\end{Definition}
The lexicographic product of two graphs $G_1$ and $G_2$ is denoted
by $G_1 \circ G_2$, is the graph with vertex set $V(G_1)\times
V(G_2)$ = $\{(a,v)\ |\ a\in V(G_1)$ and $v\in V(G_2)\}$, where
$(a,v)$ is adjacent to $(b,w)$ whenever $ab\in E(G_1)$ or $a=b$ and
$vw\in E(G_2)$. For any vertex $a\in V(G_1)$ and $b\in V(G_2)$, we
define the vertex set $G_{2}(a)$ = $\{(a,v)\in V(G_1\circ G_2)\ |\
v\in V(G_2)\}$ and $G_{1}(b)$ = $\{(v,b)\in V(G_1\circ G_2)\ |\ v
\in V(G_1)\}$.
\begin{Definition}
 Let
$\tilde{G_1}=\langle\mathfrak{F}_{1*},\,\mathfrak{F}_{1}^{*},\,\mathcal{K}_{1*},\,
\mathcal{K}_{1}^{*},\,\mathcal{A},\, \mathcal{X}\rangle$ and
$\tilde{G_2}=\langle\mathfrak{F}_{2*},\,\mathfrak{F}_{2}^{*},\,\mathcal{K}_{2*},\,
\mathcal{K}_{2}^{*},\,\mathcal{B},\, \mathcal{Y}\rangle$  be soft
rough graphs of $G_{1}$ and $G_{2}$ respectively. The lexicographic
 product of $\tilde{G_1}$ and $\tilde{G_2}$ is denoted by $
\tilde{G_1} \odot \tilde{G_2}$ and is defined as
 $$ \tilde{G_{1}} \odot \tilde{G_{2}}=\langle\mathcal{N}_{*},
 \mathcal{N}^{*}, \, \mathcal{A}\times \mathcal{B},\,\mathcal{X} \times \mathcal{Y}\rangle,$$ where
$\mathcal{N}_{*}(\mathcal{X} \times
\mathcal{Y})=\mathcal{H}_{1*}(\mathcal{X}) \circ
\mathcal{H}_{2*}(\mathcal{Y})$, $\mathcal{N}^{*}(\mathcal{X} \times
\mathcal{Y})=\mathcal{H}_{1}^{*}(\mathcal{X}) \circ
\mathcal{H}_{2}^{*}(\mathcal{Y})$ and $\mathcal{H}_{1*}(\mathcal{X})
\circ \mathcal{H}_{2*}(\mathcal{Y}),$
$\mathcal{H}_{1}^{*}(\mathcal{X}) \circ
\mathcal{H}_{2}^{*}(\mathcal{Y})$ be the lexicographic product of
subgraphs $\mathcal{H}_{1*}(\mathcal{X})$,
$\mathcal{H}_{2*}(\mathcal{Y})$, $\mathcal{H}_{1}^*(\mathcal{X})$
and $\mathcal{H}_{2}^{*}(\mathcal{Y})$.
\end{Definition}
The join of two graphs $G_1=(V_1,\, E_1)$ and $G_2=(V_2,\, E_2)$ is
denoted by $G_1+G_2$ and is defined as the union of graphs obtained
by joining each vertex $v \in V_1$ to all vertices $w \in V_2$, i.e
$G_1 + G_2=(V_1 \cup V_2,\,E_1 \cup E_2\, \cup E)$, where $E$ is the
set of all edges joining the vertices of $V_1$ and $V_2.$
\begin{Definition}
 Let
$\tilde{G_1}=\langle\mathfrak{F}_{1*},\,\mathfrak{F}_{1}^{*},\,\mathcal{K}_{1*},\,
\mathcal{K}_{1}^{*},\,\mathcal{A},\, \mathcal{X}\rangle$ and
$\tilde{G_2}=\langle\mathfrak{F}_{2*},\,\mathfrak{F}_{2}^{*},\,\mathcal{K}_{2*},\,
\mathcal{K}_{2}^{*},\,\mathcal{B},\, \mathcal{Y}\rangle$ be soft
rough graphs of $G_{1}$ and $G_{2}$ respectively. The join of
$\tilde{G_1}$ and $\tilde{G_2}$ is denoted by $ \tilde{G_1} \oplus
\tilde{G_2}$ and is defined as
 $$ \tilde{G_{1}} \oplus \tilde{G_{2}}=\langle\mathcal{D}_{*},
 \mathcal{D}^{*}, \, \mathcal{A}\times\mathcal{B},\,\mathcal{X} \times
 \mathcal{Y}\rangle,$$ where
$\mathcal{D}_{*}(\mathcal{X} \times
\mathcal{Y})=\mathcal{H}_{1*}(\mathcal{X})+
\mathcal{H}_{2*}(\mathcal{Y})$, $\mathcal{D}^{*}(\mathcal{X} \times
\mathcal{Y})=\mathcal{H}_{1}^{*}(\mathcal{X})+
\mathcal{H}_{2}^{*}(\mathcal{Y})$ and $\mathcal{H}_{1*}(\mathcal{X})
+ \mathcal{H}_{2*}(\mathcal{Y}),$ $\mathcal{H}_{1}^{*}(\mathcal{X})
+ \mathcal{H}_{2}^{*}(\mathcal{Y})$ be the join of subgraphs
$\mathcal{H}_{1*}(\mathcal{X}),$ $\mathcal{H}_{2*}(\mathcal{Y}),$
$\mathcal{H}_{1}^{*}(\mathcal{X})$ and
$\mathcal{H}_{2}^{*}(\mathcal{Y})$.
\end{Definition}
The corona product of two graphs $G_1=(V_1,\, E_1)$ and $G_2=(V_2,\,
E_2)$ with $|G_1|=n_1$ and $|G_2|=n_2$ is denoted by $G_1\diamond
G_2$ and is defined as the graph obtained from $G_1$ and $G_2$ by
taking one copy of $G_1$ and $n_1$ copies of $G_2$ and joining by an
edge each vertex from the $i$th-copy of $G_2$ with the $i$th-vertex
of $G_1$.

\begin{Definition}
 Let
$\tilde{G_1}=\langle\mathfrak{F}_{1*},\,\mathfrak{F}_{1}^{*},\,\mathcal{K}_{1*},\,
\mathcal{K}_{1}^{*},\,\mathcal{A},\, \mathcal{X}\rangle$ and
$\tilde{G_2}=\langle\mathfrak{F}_{2*},\,\mathfrak{F}_{2}^{*},\,\mathcal{K}_{2*},\,
\mathcal{K}_{2}^{*},\,\mathcal{B},\, \mathcal{Y}\rangle$  be soft
rough graphs of $G_{1}$ and $G_{2}$ respectively. The corona product
of $\tilde{G_1}$ and $\tilde{G_2}$ is denoted by $ \tilde{G_1}
\circledcirc \tilde{G_2}$ and is defined as
 $$ \tilde{G_{1}} \circledcirc \tilde{G_{2}}=\langle\mathcal{M}_{*},
 \mathcal{M}^{*}, \, \mathcal{A}\times \mathcal{B},\,\mathcal{X} \times
 \mathcal{Y}\rangle,$$ where
$\mathcal{M}_{*}(\mathcal{X} \times
\mathcal{Y})=\mathcal{H}_{1*}(\mathcal{X}) \diamond
\mathcal{H}_{2*}(\mathcal{Y})$, $\mathcal{M}^{*}(\mathcal{X} \times
\mathcal{Y})=\mathcal{H}_{1}^{*}(\mathcal{X}) \diamond
\mathcal{H}_{2}^{*}(\mathcal{Y})$ and $\mathcal{H}_{1*}(\mathcal{X})
\diamond \mathcal{H}_{2*}(\mathcal{Y}),$
$\mathcal{H}_{1}^{*}(\mathcal{X}) \diamond
\mathcal{H}_{2}^{*}(\mathcal{Y})$ be the corona product of subgraphs
$\mathcal{H}_{1*}(\mathcal{X}),$ $\mathcal{H}_{2*}(\mathcal{Y}),$
$\mathcal{H}_{1}^{*}(\mathcal{X})$ and
$\mathcal{H}_{2}^{*}(\mathcal{Y})$.
\end{Definition}

\begin{Example} Let $G_{1}$ and $G_{2}$ be the graphs given
in \textup{FIGURE \ref{fig50}}. Let $V_{1}=\{a, b, c, d, e\}$ and
$V_{2}=\{f, g, h, k\}$ be the set of vertices of $G_{1}$ and $G_{2}$
respectively. Let $E_{1}=\{e_{1}, e_{2}, e_{3}, e_{4}\}$ and
$E_{2}=\{t_{1}, t_{2}, t_{3}\}$ be the set of edges of $G_{1}$ and
$G_{2}$ respectively. Let $\mathcal{A}=\{d,e\}\subseteq V_1$ and
$\mathcal{B}=\{g,k\}\subseteq V_2$ be the set of parameters. We
define approximate functions $\mathfrak{F}_{1}: \mathcal{A}
\rightarrow P(V_{1})$ and $\mathfrak{F}_{2}: \mathcal{B} \rightarrow
P(V_{2})$ by
$$\mathfrak{F}_{1}(x)=\{y \in V_{1}: xRy \Leftrightarrow y \in N(x)\},\,\,\mathfrak{F}_{2}(x)=\{y \in V_{2}: xRy \Leftrightarrow y \in
N[x]\}.$$
\begin{figure}[!ht]
    \centerline
      {\includegraphics[width= 6 cm]{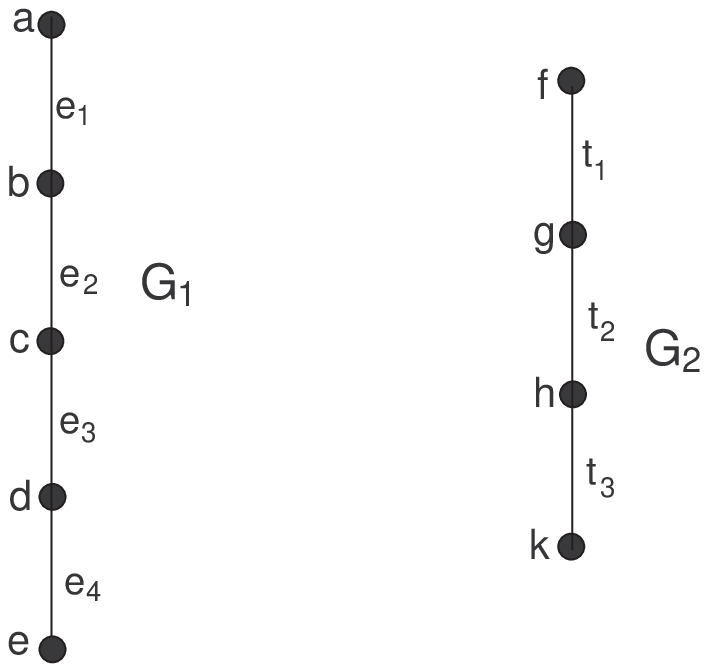}}
      \caption{}\label{fig50}
\end{figure}
Then $\mathfrak{F}_{1}(e)=\{d\},$ $\mathfrak{F}_{1}(d)=\{c,e\},$
$\mathfrak{F}_{2}(g)=\{f, g, h\},$ $ \mathfrak{F}_{2}(k)=\{h,k\}.$
Let $\mathcal{X}=\{b, c, d\}\subseteq V_1$ and
$\mathcal{Y}=\{h,k\}\subseteq V_2.$ The lower and upper vertex
approximations with respect to $\mathcal{X}$ and $\mathcal{Y}$ are
given as follows:
\begin{figure}[!ht]
    \centerline
      {\includegraphics[width=7 cm]{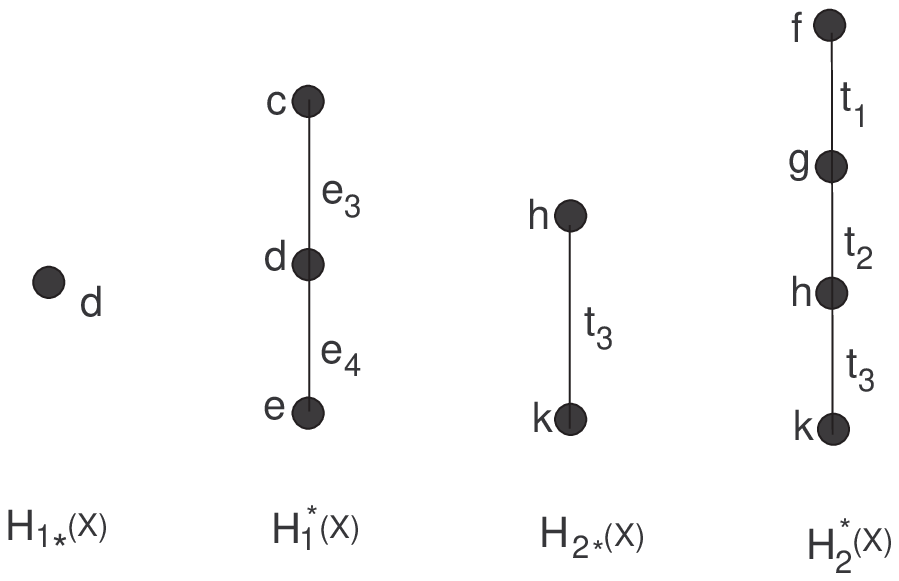}}
      \caption{}\label{fig51}
\end{figure}
$$\mathfrak{F}_{1*}(\mathcal{X})=\{d\},\,
\mathfrak{F}_{1}^{*}(\mathcal{X})=\{c, e, d\},\,
\mathfrak{F}_{2*}(\mathcal{Y})=\{k, h\},\,
\mathfrak{F}_{2}^{*}(\mathcal{Y})=\{f, g, h, k\}.$$ The lower and
upper edge approximations with respect to $\mathcal{X}$ and
$\mathcal{Y}$ are given as follows:
$$\mathcal{K}_{1}^{*}(\mathcal{X})=\{e_{3}, e_{4}\},\,
\mathcal{K}_{1*}(\mathcal{X})=\emptyset,\,
\mathcal{K}_{2}^{*}(\mathcal{Y})=\{t_{1}, t_{2},
t_{3}\},\,\mathcal{K}_{2*}(\mathcal{Y})=\{ t_{3}\}.$$ \textup{FIGURE
\ref{fig51}} shows
$\mathcal{H}_{1*}(\mathcal{X})=(\mathfrak{F}_{1*}(\mathcal{X}),\,\mathcal{K}_{1*}(\mathcal{X})),$
$\mathcal{H}_{2*}(\mathcal{Y})=(\mathfrak{F}_{2*}(\mathcal{Y}),\,\mathcal{K}_{2*}(\mathcal{Y})),$
$\mathcal{H}_{1}^{*}(\mathcal{X})=(\mathfrak{F}_{1}^{*}(\mathcal{X}),\,\mathcal{K}_{1}^{*}(\mathcal{X})),$
$\mathcal{H}_{2}^{*}(\mathcal{Y})=(\mathfrak{F}_{2}^{*}(\mathcal{Y}),\,\mathcal{K}_{2}^{*}(\mathcal{Y})).$
So, $\tilde{G}_{1}$ and $\tilde{G}_{2}$ are the soft rough graphs.
\textup{FIGURE \ref{figcartesian}} shows the subgraphs
$\mathcal{L_*}$ and $\mathcal{L^*}$.
\begin{figure}[!ht]
    \centerline
      {\includegraphics[width=8cm]{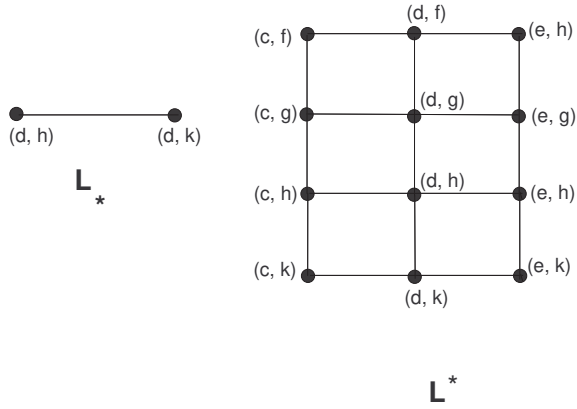}}
      \caption{Subgraphs $\mathcal{L_*}$ and $\mathcal{L^*}$}\label{figcartesian}
\end{figure}
 \textup{FIGURE \ref{fig52}} shows the subgraphs $\mathcal{N_*}$ and
 $\mathcal{N^*}$.
\begin{figure}[!ht]
    \centerline
      {\includegraphics[width=8cm]{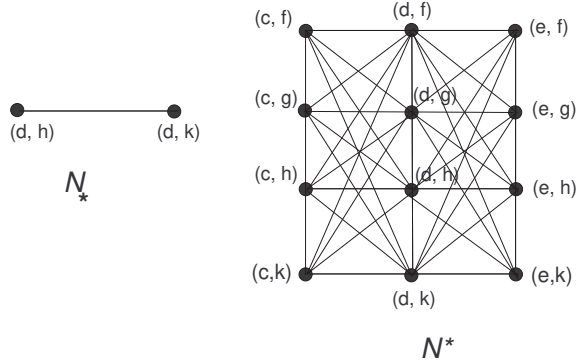}}
      \caption{Subgraphs $\mathcal{N_*}$ and $\mathcal{N^*}$}\label{fig52}
\end{figure}
\textup{FIGURE \ref{fig54}} shows the subgraphs $\mathcal{D_*}$ and
$\mathcal{D^*}$.
\begin{figure}[!ht]
    \centerline
      {\includegraphics[width=7cm]{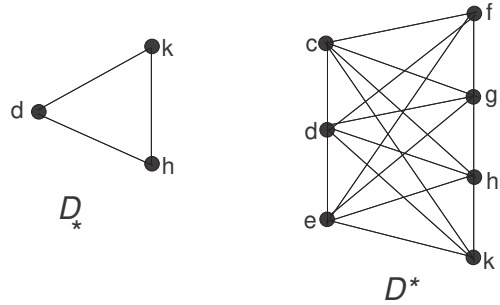}}
      \caption{Subgraphs $\mathcal{D_*}$ and $\mathcal{D^*}$}\label{fig54}
\end{figure}
\textup{FIGURE \ref{fig53}} shows the subgraphs $\mathcal{M_*}$ and
$\mathcal{M^*}$.
\begin{figure}[!ht]
    \centerline
      {\includegraphics[width=7cm]{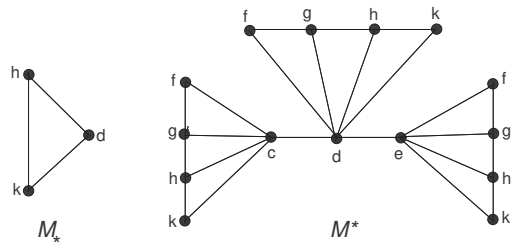}}
      \caption{Subgraphs $\mathcal{M_*}$ and $\mathcal{M^*}$}\label{fig53}
\end{figure}
\end{Example}
\begin{Theorem}
Let
$\tilde{G_1}=\langle\mathfrak{F}_{1*},\,\mathfrak{F}_{1}^{*},\,\mathcal{K}_{1*},\,
\mathcal{K}_{1}^{*},\,\mathcal{A},\, \mathcal{X}\rangle$ and
$\tilde{G_2}=\langle\mathfrak{F}_{2*},\,\mathfrak{F}_{2}^{*},\,\mathcal{K}_{2*},\,
\mathcal{K}_{2}^{*},\,\mathcal{B},\,\mathcal{Y}\rangle$  be soft
rough graphs of $G_{1}$ and $G_{2}$ respectively. Let $\mathcal{L}$
be the Cartesian product of $G_{1}$ and $G_{2}$ then $\tilde{G_{1}}
\ltimes
\tilde{G_{2}}=\langle\mathcal{L_{*}},\,\mathcal{L^{*}},\,\mathcal{A}\times
\mathcal{B},\,\mathcal{X} \times {Y}\rangle$ is a soft rough graph
of $\mathcal{L}.$
\begin{proof} The Cartesian product of two soft rough
graphs $\tilde{G_1}$  and $\tilde{G_2}$ is defined as $\tilde{G_{1}}
\ltimes
\tilde{G_{2}}=\langle\mathcal{L_{*}},\,\mathcal{L^{*}},\,\mathcal{A}\times
\mathcal{B},\,\mathcal{X} \times  \mathcal{Y}\rangle,$ where
$\mathcal{L_{*}}(\mathcal{X}\times
\mathcal{Y})=\mathcal{H}_{1*}(\mathcal{X}) \times
\mathcal{H}_{2*}(\mathcal{Y})$ and $\mathcal{L}^{*}(\mathcal{X
\times Y})=\mathcal{H}_{1}^{*}(\mathcal{X}) \times
\mathcal{H}_{2}^{*}(\mathcal{Y}).$ Since $\tilde{G_{1}}$ is a soft
rough graph of $G_{1},$ $\mathcal{H}_{1*}(\mathcal{X})$ and
$\mathcal{H}_{1}^{*}(\mathcal{X})$ are subgraphs of $G_{1},$ and
$\tilde{G_{2}}$ is a soft rough graph of $G_{2},$
$\mathcal{H}_{2*}(\mathcal{Y})$ and
$\mathcal{H}_{2}^{*}(\mathcal{Y})$ are subgraphs of $G_{2}$. So,
$\mathcal{L_{*}}(\mathcal{X \times Y})$ and
$\mathcal{L^{*}}(\mathcal{X \times Y})$ being the Cartesian product
of two subgraphs are subgraphs of $\mathcal{L}.$  Hence
$\tilde{G_{1}} \ltimes \tilde{G_{2}}$ is a soft rough graph of
$\mathcal{L}.$\end{proof}
\end{Theorem}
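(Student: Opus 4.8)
The plan is to verify that $\tilde{G_1} \ltimes \tilde{G_2} = \langle\mathcal{L}_{*},\,\mathcal{L}^{*},\,\mathcal{A}\times\mathcal{B},\,\mathcal{X}\times\mathcal{Y}\rangle$ satisfies every clause in the definition of a soft rough graph of $\mathcal{L}$, the decisive clause being the last one: that $\mathcal{L}_{*}(\mathcal{X}\times\mathcal{Y})$ and $\mathcal{L}^{*}(\mathcal{X}\times\mathcal{Y})$ are subgraphs of $\mathcal{L}=G_{1}\times G_{2}$. Since $G_{1}$ and $G_{2}$ are simple, their Cartesian product $\mathcal{L}$ is again simple, and $\mathcal{A}\times\mathcal{B}$ together with $\mathcal{X}\times\mathcal{Y}$ are non-empty, so the structural preliminaries are immediate.

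First I would unwind the definition of the Cartesian product of soft rough graphs, recording that $\mathcal{L}_{*}(\mathcal{X}\times\mathcal{Y})=\mathcal{H}_{1*}(\mathcal{X})\times\mathcal{H}_{2*}(\mathcal{Y})$ and $\mathcal{L}^{*}(\mathcal{X}\times\mathcal{Y})=\mathcal{H}_{1}^{*}(\mathcal{X})\times\mathcal{H}_{2}^{*}(\mathcal{Y})$, each being the graph-theoretic Cartesian product of two subgraphs. Next I would invoke the hypotheses: because $\tilde{G_1}$ is a soft rough graph of $G_{1}$, both $\mathcal{H}_{1*}(\mathcal{X})$ and $\mathcal{H}_{1}^{*}(\mathcal{X})$ are subgraphs of $G_{1}$; likewise $\mathcal{H}_{2*}(\mathcal{Y})$ and $\mathcal{H}_{2}^{*}(\mathcal{Y})$ are subgraphs of $G_{2}$.

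The heart of the argument is the general fact that the Cartesian product of subgraphs is a subgraph of the Cartesian product. I would prove this directly from the edge-set description of $\times$: if $H_{1}$ is a subgraph of $G_{1}$ and $H_{2}$ is a subgraph of $G_{2}$, then $V(H_{1})\times V(H_{2})\subseteq V_{1}\times V_{2}$ gives the vertex containment, while every edge of $H_{1}\times H_{2}$ is either of the form $\{(u,v),(u,w)\}$ with $(v,w)\in E(H_{2})\subseteq E_{2}$, or of the form $\{(y,x),(z,x)\}$ with $(y,z)\in E(H_{1})\subseteq E_{1}$, and hence lies in the edge set of $\mathcal{L}$. Applying this once to the pair $(\mathcal{H}_{1*}(\mathcal{X}),\mathcal{H}_{2*}(\mathcal{Y}))$ and once to $(\mathcal{H}_{1}^{*}(\mathcal{X}),\mathcal{H}_{2}^{*}(\mathcal{Y}))$ shows that $\mathcal{L}_{*}(\mathcal{X}\times\mathcal{Y})$ and $\mathcal{L}^{*}(\mathcal{X}\times\mathcal{Y})$ are subgraphs of $\mathcal{L}$, completing the verification.

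I expect no serious obstacle here: the only substantive point is the subgraph-of-product lemma, and even that reduces to a line-by-line reading of the explicit edge set of the Cartesian product given just before the definition. The remaining clauses---simplicity of $\mathcal{L}$, non-emptiness of the parameter and object sets, and the soft-rough-set status of the vertex and edge components---are inherited directly from the corresponding data of $\tilde{G_1}$ and $\tilde{G_2}$, so the proof assembles into a short verification rather than a computation.
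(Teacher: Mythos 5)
Your proposal is correct and takes essentially the same approach as the paper: both reduce the claim to the fact that the Cartesian product of subgraphs of $G_1$ and $G_2$ is a subgraph of $\mathcal{L}=G_1\times G_2$, applied to the pairs $(\mathcal{H}_{1*}(\mathcal{X}),\mathcal{H}_{2*}(\mathcal{Y}))$ and $(\mathcal{H}_{1}^{*}(\mathcal{X}),\mathcal{H}_{2}^{*}(\mathcal{Y}))$. The only difference is that you justify this key fact explicitly from the edge-set description of the Cartesian product, whereas the paper simply asserts it, so your write-up is if anything more complete.
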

\begin{Theorem}
Let
$\tilde{G_1}=\langle\mathfrak{F}_{1*},\,\mathfrak{F}_{1}^{*},\,\mathcal{K}_{1*},\,
\mathcal{K}_{1}^{*},\,\mathcal{A},\, \mathcal{X}\rangle$ and
$\tilde{G_2}=\langle\mathfrak{F}_{2*},\,\mathfrak{F}_{2}^{*},\,\mathcal{K}_{2*},\,
\mathcal{K}_{2}^{*},\,\mathcal{B},\,\mathcal{Y}\rangle$  be soft
rough graphs of $G_{1}$ and $G_{2}$ respectively. Let $\mathcal{N}$
be the lexicographic product of $G_{1}$ and $G_{2}$ then
$\tilde{G_{1}} \odot
\tilde{G_{2}}=\langle\mathcal{N}_{*},\,\mathcal{N^{*}},\,\mathcal{A}\times\mathcal{B},\,\mathcal{X
\times Y}\rangle$ is a soft rough graph of $\mathcal{N}.$
\begin{proof}
The lexicographic product of two soft rough graphs $\tilde{G_1}$ and
$\tilde{G_2}$ is $\tilde{G_{1}} \odot
\tilde{G_{2}}=\langle\mathcal{N}_{*},\,\mathcal{N}^{*},\,\mathcal{A}\times
\mathcal{B},\,\mathcal{X} \times  \mathcal{Y}\rangle,$ where
$\mathcal{N}_{*}(\mathcal{X}\times
\mathcal{Y})=\mathcal{H}_{1*}(\mathcal{X}) \circ
\mathcal{H}_{2*}(\mathcal{Y})$ and $\mathcal{N}^{*}(\mathcal{X
\times Y})=\mathcal{H}_{1}^{*}(\mathcal{X})\, \circ\,
\mathcal{H}_{2}^{*}(\mathcal{Y}).$ Since $\tilde{G_{1}}$ is a soft
rough graph of $G_{1},$ $\mathcal{H}_{1*}(\mathcal{X})$ and
$\mathcal{H}_{1}^{*}(\mathcal{X})$ are subgraphs of $G_{1}.$  Since
$\tilde{G_{2}}$ is a soft rough graph of $G_{2},$
$\mathcal{H}_{2*}(\mathcal{Y})$ and
$\mathcal{H}_{2}^{*}(\mathcal{Y})$ are subgraphs of $G_{2}$. So,
$\mathcal{N}_{*}(\mathcal{X \times Y})$ and
$\mathcal{N}^{*}(\mathcal{X \times Y})$ being the lexicographic
product of two subgraphs are subgraphs of $\mathcal{N}.$  Hence
$\tilde{G_{1}} \odot \tilde{G_{2}}$ is a soft rough graph of
$\mathcal{N}.$\end{proof}
\end{Theorem}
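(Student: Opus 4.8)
The plan is to mirror the proof of the preceding Cartesian-product theorem, since the only structural difference is that the binary graph operation $\circ$ replaces $\times$. First I would unwind the definition of the lexicographic product of soft rough graphs, recording that $\mathcal{N}_{*}(\mathcal{X}\times\mathcal{Y})=\mathcal{H}_{1*}(\mathcal{X})\circ\mathcal{H}_{2*}(\mathcal{Y})$ and $\mathcal{N}^{*}(\mathcal{X}\times\mathcal{Y})=\mathcal{H}_{1}^{*}(\mathcal{X})\circ\mathcal{H}_{2}^{*}(\mathcal{Y})$. From the hypothesis that $\tilde{G_1}$ and $\tilde{G_2}$ are soft rough graphs of $G_1$ and $G_2$, condition $(6)$ of the soft-rough-graph definition gives that $\mathcal{H}_{1*}(\mathcal{X})$, $\mathcal{H}_{1}^{*}(\mathcal{X})$ are subgraphs of $G_1$ and that $\mathcal{H}_{2*}(\mathcal{Y})$, $\mathcal{H}_{2}^{*}(\mathcal{Y})$ are subgraphs of $G_2$.

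The heart of the argument is a single lemma-level fact: if $H_1$ is a subgraph of $G_1$ and $H_2$ is a subgraph of $G_2$, then $H_1\circ H_2$ is a subgraph of $G_1\circ G_2=\mathcal{N}$. For the vertex sets this is immediate, since $V(H_1\circ H_2)=V(H_1)\times V(H_2)\subseteq V(G_1)\times V(G_2)=V(\mathcal{N})$. For the edges I would check the adjacency rule of $\circ$ directly: an edge of $H_1\circ H_2$ joins $(a,v)$ to $(b,w)$ precisely when $ab\in E(H_1)$, or else $a=b$ with $vw\in E(H_2)$; in the first case $ab\in E(G_1)$ and in the second $vw\in E(G_2)$, so in either case $(a,v)(b,w)$ is an edge of $\mathcal{N}$. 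Hence $E(H_1\circ H_2)\subseteq E(\mathcal{N})$.

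Applying this fact to the two pairs of factors shows that $\mathcal{N}_{*}(\mathcal{X}\times\mathcal{Y})$ and $\mathcal{N}^{*}(\mathcal{X}\times\mathcal{Y})$ are subgraphs of $\mathcal{N}$, which is exactly condition $(6)$ for the candidate soft rough graph $\tilde{G_1}\odot\tilde{G_2}=\langle\mathcal{N}_{*},\mathcal{N}^{*},\mathcal{A}\times\mathcal{B},\mathcal{X}\times\mathcal{Y}\rangle$; the remaining clauses $(1)$--$(5)$ are inherited verbatim from those of $\tilde{G_1}$ and $\tilde{G_2}$, so I would note them only briefly. The only place requiring genuine care is the edge half of the subgraph-preservation fact, because the lexicographic adjacency rule is asymmetric in its two factors: the first factor contributes edges unconditionally on the second coordinate, whereas the second factor contributes edges only within a fixed first coordinate. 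Tracking the disjunctive ``or'' of that rule through both cases is the one spot where the lexicographic proof differs in substance from the Cartesian one, and so it is where I would be most careful.
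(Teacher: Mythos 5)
Your proof is correct and takes essentially the same route as the paper's: both reduce the claim to the fact that the lexicographic product of subgraphs $\mathcal{H}_{1*}(\mathcal{X})\circ\mathcal{H}_{2*}(\mathcal{Y})$ and $\mathcal{H}_{1}^{*}(\mathcal{X})\circ\mathcal{H}_{2}^{*}(\mathcal{Y})$ is a subgraph of $\mathcal{N}=G_1\circ G_2$, using condition $(6)$ of the definition applied to $\tilde{G_1}$ and $\tilde{G_2}$. The only difference is that you explicitly verify the subgraph-preservation fact (the vertex inclusion and the two-case check of the asymmetric adjacency rule), which the paper merely asserts, so your version is if anything more complete.
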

\begin{Theorem}
Let
$\tilde{G_1}=\langle\mathfrak{F}_{1*},\,\mathfrak{F}_{1}^{*},\,\mathcal{K}_{1*},\,
\mathcal{K}_{1}^{*},\,\mathcal{A},\, \mathcal{X}\rangle$ and
$\tilde{G_2}=\langle\mathfrak{F}_{2*},\,\mathfrak{F}_{2}^{*},\,\mathcal{K}_{2*},\,
\mathcal{K}_{2}^{*},\,\mathcal{B},\,\mathcal{Y}\rangle$  be soft
rough graphs of $G_{1}$ and $G_{2}$ respectively. Let $\mathcal{D}$
be the join of $G_{1}$ and $G_{2}$ then $\tilde{G_{1}} \oplus
\tilde{G_{2}}=\langle\mathcal{D}_{*},\,\mathcal{D^{*}},\,\mathcal{A}\times
\mathcal{B},\,\mathcal{X \times Y}\rangle$ is a soft rough graph of
$\mathcal{N}.$\end{Theorem}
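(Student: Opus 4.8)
The plan is to follow the same template used for the two preceding theorems on the Cartesian and lexicographic products, since the join of soft rough graphs was defined in an exactly parallel fashion. First I would unfold the definition of $\tilde{G_1} \oplus \tilde{G_2}$, recalling that $\mathcal{D}_{*}(\mathcal{X}\times\mathcal{Y})=\mathcal{H}_{1*}(\mathcal{X})+\mathcal{H}_{2*}(\mathcal{Y})$ and $\mathcal{D}^{*}(\mathcal{X}\times\mathcal{Y})=\mathcal{H}_{1}^{*}(\mathcal{X})+\mathcal{H}_{2}^{*}(\mathcal{Y})$. By the defining conditions of a soft rough graph, it then suffices to show that these two joined graphs are subgraphs of $\mathcal{D}=G_1+G_2$.

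Next I would invoke the hypotheses. Since $\tilde{G_1}$ is a soft rough graph of $G_1$, both $\mathcal{H}_{1*}(\mathcal{X})$ and $\mathcal{H}_{1}^{*}(\mathcal{X})$ are subgraphs of $G_1$; likewise, since $\tilde{G_2}$ is a soft rough graph of $G_2$, both $\mathcal{H}_{2*}(\mathcal{Y})$ and $\mathcal{H}_{2}^{*}(\mathcal{Y})$ are subgraphs of $G_2$. Writing $H_1$ for either the lower or upper graph of $\tilde{G_1}$ and $H_2$ for the corresponding graph of $\tilde{G_2}$, the task reduces to a single monotonicity statement about the join.

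The crux of the argument, and the only step that is not purely formal bookkeeping, is to verify that if $H_1$ is a subgraph of $G_1$ and $H_2$ is a subgraph of $G_2$, then $H_1+H_2$ is a subgraph of $G_1+G_2$. I would check the vertex and edge sets separately. The vertex set $V(H_1)\cup V(H_2)$ is contained in $V(G_1)\cup V(G_2)$. For the edges, $E(H_1+H_2)$ consists of $E(H_1)\cup E(H_2)$, which lies inside $E(G_1)\cup E(G_2)$, together with every edge joining a vertex of $V(H_1)$ to a vertex of $V(H_2)$. Since each such pair lies in $V(G_1)\times V(G_2)$ and the join $G_1+G_2$ by definition contains \emph{all} edges between $V(G_1)$ and $V(G_2)$, every one of these cross edges is already present in $\mathcal{D}$. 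Hence $H_1+H_2$ is a subgraph of $\mathcal{D}$. This is precisely where one must use that the join supplies the full set of cross edges, so that the extra edges created by the join of the subgraphs are genuinely available in $\mathcal{D}$; this is the point to state with care rather than leave as ``obvious''.

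Applying this with $H_1=\mathcal{H}_{1*}(\mathcal{X})$, $H_2=\mathcal{H}_{2*}(\mathcal{Y})$, and then again with $H_1=\mathcal{H}_{1}^{*}(\mathcal{X})$, $H_2=\mathcal{H}_{2}^{*}(\mathcal{Y})$, shows that $\mathcal{D}_{*}(\mathcal{X}\times\mathcal{Y})$ and $\mathcal{D}^{*}(\mathcal{X}\times\mathcal{Y})$ are both subgraphs of $\mathcal{D}$. This verifies the subgraph conditions in the definition of a soft rough graph, so $\tilde{G_1}\oplus\tilde{G_2}$ is a soft rough graph of $\mathcal{D}$. I note in passing that the closing phrase ``of $\mathcal{N}$'' in the statement is a typographical slip for ``of $\mathcal{D}$''.
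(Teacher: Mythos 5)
Your proposal is correct and takes essentially the same route as the paper's own proof: unfold the definition of $\tilde{G_1}\oplus\tilde{G_2}$, invoke that $\mathcal{H}_{1*}(\mathcal{X}),\,\mathcal{H}_{1}^{*}(\mathcal{X})$ and $\mathcal{H}_{2*}(\mathcal{Y}),\,\mathcal{H}_{2}^{*}(\mathcal{Y})$ are subgraphs of $G_1$ and $G_2$ respectively, and conclude that their joins are subgraphs of $\mathcal{D}=G_1+G_2$. The only difference is that you explicitly verify the cross-edge containment, which the paper leaves as the bare assertion that the join of two subgraphs is a subgraph of the join, and you are also right that ``of $\mathcal{N}$'' in the statement (and in the paper's proof) is a typographical slip for ``of $\mathcal{D}$.''
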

\begin{proof}
The join of two soft rough graphs $\tilde{G_1}$ and $\tilde{G_2}$ is
$\tilde{G_{1}}\oplus
\tilde{G_{2}}=\langle\mathcal{D}_{*},\,\mathcal{D}^{*},\,\mathcal{A}\times
\mathcal{B},\,\mathcal{X} \times  \mathcal{Y}\rangle,$ where
$\mathcal{D}_{*}(\mathcal{X}\times
\mathcal{Y})=\mathcal{H}_{1*}(\mathcal{X}) +
\mathcal{H}_{2*}(\mathcal{Y})$ and $\mathcal{D}^{*}(\mathcal{X
\times Y})=\mathcal{H}_{1}^{*}(\mathcal{X})+
\mathcal{H}_{2}^{*}(\mathcal{Y}).$ Since $\tilde{G_{1}}$ is a soft
rough graph of $G_{1},$ $\mathcal{H}_{1*}(\mathcal{X})$ and
$\mathcal{H}_{1}^{*}(\mathcal{X})$ are subgraphs of $G_{1}.$ Since
$\tilde{G_{2}}$ is a soft rough graph of $G_{2},$
$\mathcal{H}_{2*}(\mathcal{Y})$ and
$\mathcal{H}_{2}^{*}(\mathcal{Y})$ are subgraphs of $G_{2}$. So,
$\mathcal{D}_{*}(\mathcal{X \times Y})$ and
$\mathcal{D}^{*}(\mathcal{X \times Y})$ being the join  of two
subgraphs are subgraphs of $\mathcal{N}$. Hence $\tilde{G_{1}}
\oplus \tilde{G_{2}}$ is a soft rough graph of
$\mathcal{N}.$\end{proof}

\begin{Theorem}
Let
$\tilde{G_1}=\langle\mathfrak{F}_{1*},\,\mathfrak{F}_{1}^{*},\,\mathcal{K}_{1*},\,
\mathcal{K}_{1}^{*},\,\mathcal{A},\, \mathcal{X}\rangle$ and
$\tilde{G_2}=\langle\mathfrak{F}_{2*},\,\mathfrak{F}_{2}^{*},\,\mathcal{K}_{2*},\,
\mathcal{K}_{2}^{*},\,\mathcal{B},\,\mathcal{Y}\rangle$  be soft
rough graphs of $G_{1}$ and $G_{2}$ respectively. Let $\mathcal{M}$
be the corona product of $G_{1}$ and $G_{2}$ then $\tilde{G_{1}}
\circledcirc
\tilde{G_{2}}=\langle\mathcal{M}_{*},\,\mathcal{M^{*}},\,\mathcal{A}\times\mathcal{B},\,\mathcal{X
\times Y}\rangle$ is a soft rough graph of $\mathcal{M}.$
 \begin{proof}
The corona product of two soft rough graphs $\tilde{G_1}$ and
$\tilde{G_2}$ is $\tilde{G_{1}} \circledcirc
\tilde{G_{2}}=\langle\mathcal{M}_{*},\,\mathcal{M}^{*},\,\mathcal{A}\times
\mathcal{B},\,\mathcal{X} \times  \mathcal{Y}\rangle,$ where
$\mathcal{M}_{*}(\mathcal{X}\times
\mathcal{Y})=\mathcal{H}_{1*}(\mathcal{X}) \diamond
\mathcal{H}_{2*}(\mathcal{Y})$ and $\mathcal{M}^{*}(\mathcal{X
\times Y})=\mathcal{H}_{1}^{*}(\mathcal{X}) \diamond
\mathcal{H}_{2}^{*}(\mathcal{Y}).$ Since $\tilde{G_{1}}$ is a soft
rough graph of $G_{1},$ $\mathcal{H}_{1*}(\mathcal{X})$ and
$\mathcal{H}_{1}^{*}(\mathcal{X})$ are subgraphs of $G_{1}.$ Since
$\tilde{G_{2}}$ is a soft rough graph of $G_{2},$
$\mathcal{H}_{2*}(\mathcal{Y})$ and
$\mathcal{H}_{2}^{*}(\mathcal{Y})$ are subgraphs of $G_{2}$. So,
$\mathcal{M}_{*}(\mathcal{X \times Y})$ and
$\mathcal{M}^{*}(\mathcal{X \times Y})$ being the corona product of
two subgraphs are subgraphs of $\mathcal{N}.$ Hence $\tilde{G_{1}}
\circledcirc \tilde{G_{2}}$ is a soft rough graph of
$\mathcal{M}.$\end{proof}
\end{Theorem}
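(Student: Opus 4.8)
The plan is to follow the same template used in the three preceding theorems (Cartesian product, lexicographic product, and join), since all four results rest on a single structural fact: a graph operation applied to subgraphs yields a subgraph of the same operation applied to the ambient graphs. First I would recall that, by definition, $\tilde{G_1} \circledcirc \tilde{G_2} = \langle \mathcal{M}_*, \mathcal{M}^*, \mathcal{A} \times \mathcal{B}, \mathcal{X} \times \mathcal{Y} \rangle$ with $\mathcal{M}_*(\mathcal{X} \times \mathcal{Y}) = \mathcal{H}_{1*}(\mathcal{X}) \diamond \mathcal{H}_{2*}(\mathcal{Y})$ and $\mathcal{M}^*(\mathcal{X} \times \mathcal{Y}) = \mathcal{H}_{1}^{*}(\mathcal{X}) \diamond \mathcal{H}_{2}^{*}(\mathcal{Y})$. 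By condition $(6)$ of the definition of a soft rough graph, it then suffices to show that both $\mathcal{M}_*(\mathcal{X} \times \mathcal{Y})$ and $\mathcal{M}^*(\mathcal{X} \times \mathcal{Y})$ are subgraphs of $\mathcal{M} = G_1 \diamond G_2$.

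Next I would invoke the hypotheses: since $\tilde{G_1}$ is a soft rough graph of $G_1$, the graphs $\mathcal{H}_{1*}(\mathcal{X})$ and $\mathcal{H}_{1}^{*}(\mathcal{X})$ are subgraphs of $G_1$; likewise $\mathcal{H}_{2*}(\mathcal{Y})$ and $\mathcal{H}_{2}^{*}(\mathcal{Y})$ are subgraphs of $G_2$. The crux is then a monotonicity observation for the corona product: if $H_1$ is a subgraph of $G_1$ and $H_2$ is a subgraph of $G_2$, then $H_1 \diamond H_2$ is a subgraph of $G_1 \diamond G_2$. I would justify this by identifying each vertex $v \in V(H_1) \subseteq V(G_1)$ with the corresponding vertex of $G_1$, and each vertex of the copy of $H_2$ attached to $v$ with its counterpart in the copy of $G_2$ attached to $v$. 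Under this identification the three kinds of edges of $H_1 \diamond H_2$ are each present in $G_1 \diamond G_2$: edges inside $H_1$ are edges of $G_1$; edges inside a copy of $H_2$ are edges of the corresponding copy of $G_2$; and each edge joining $v$ to a vertex of its copy of $H_2$ is among the edges joining $v$ to every vertex of its copy of $G_2$.

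Applying this observation twice, once to the pair $\mathcal{H}_{1*}(\mathcal{X}), \mathcal{H}_{2*}(\mathcal{Y})$ and once to $\mathcal{H}_{1}^{*}(\mathcal{X}), \mathcal{H}_{2}^{*}(\mathcal{Y})$, shows that $\mathcal{M}_*(\mathcal{X} \times \mathcal{Y})$ and $\mathcal{M}^*(\mathcal{X} \times \mathcal{Y})$ are subgraphs of $\mathcal{M}$, and the conclusion follows at once. The one genuinely non-routine step is the monotonicity observation, and the subtle point there is that the number of copies of the second factor in a corona product equals the order of the first factor: since $|V(H_1)| \le |V(G_1)|$, the copies of $H_2$ must be matched to precisely those copies of $G_2$ sitting over the vertices retained in $H_1$, rather than to all copies. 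Keeping this vertex-indexing consistent is the only place where care is needed; the remainder is the same routine verification as in the Cartesian, lexicographic, and join cases.
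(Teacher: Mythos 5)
Your proof is correct and takes essentially the same approach as the paper's: unfold the definition of $\tilde{G_{1}} \circledcirc \tilde{G_{2}}$, note that $\mathcal{H}_{1*}(\mathcal{X})$, $\mathcal{H}_{1}^{*}(\mathcal{X})$, $\mathcal{H}_{2*}(\mathcal{Y})$, $\mathcal{H}_{2}^{*}(\mathcal{Y})$ are subgraphs of $G_1$ and $G_2$, and conclude that the corona product of subgraphs is a subgraph of $\mathcal{M} = G_1 \diamond G_2$. The only difference is that you explicitly justify the monotonicity of $\diamond$ (including the copy-indexing subtlety), a step the paper simply asserts without proof; incidentally, the paper's own proof contains a slip, writing $\mathcal{N}$ where $\mathcal{M}$ is intended.
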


\end{document}